\documentclass{amsart}

\usepackage[latin1]{inputenc}
\usepackage{fancyhdr}
\usepackage{indentfirst}
\usepackage[dvips]{graphicx}
\usepackage{graphicx}
\usepackage{newlfont}
\usepackage{amssymb}
\usepackage{amsfonts}
\usepackage{amsmath}
\usepackage{latexsym}
\usepackage{amscd,amsthm}
\usepackage{psfrag}
\usepackage[usenames]{color}
\usepackage[all]{xy}

\newtheorem{theorem}{Theorem}[section]

\newtheorem{prop}[theorem]{Proposition}

\theoremstyle{definition}
\newtheorem{defn}[theorem]{Definition}

\theoremstyle{remark}
\newtheorem{rem}[theorem]{Remark}

\newcommand{\R}{\mathbb R}
\newcommand{\N}{\mathbb N}

\newcommand{\p}{\varphi}
\newcommand{\s}{\psi}

\newcommand{\fr}{\vec \varphi}

\newcommand{\rank}{{\mathrm{rank}}}
\newcommand{\newatop}[2]{\genfrac{}{}{0pt}{1}{#1}{#2}}
\newcommand{\im}{{\mathrm{Im}}}

\numberwithin{equation}{section}



\begin{document}

\title[Invariance properties of the multidimensional matching distance]{Invariance properties of the Multidimensional matching distance in Persistent Topology and Homology}
\author[A.Cerri]{Andrea Cerri}
\address{Andrea Cerri, ARCES, Universit\`a di Bologna,
via Toffano $2/2$, I-$40135$ Bologna, Italia\newline Dipartimento
di Matematica, Universit\`a di Bologna, P.zza di Porta S. Donato
5, I-$40126$ Bologna, Italia}
\email{cerri@dm.unibo.it}

\author[P. Frosini]{Patrizio Frosini}
\address{Patrizio Frosini, ARCES, Universit\`a di Bologna,
via Toffano $2/2$, I-$40135$ Bologna, Italia\newline Dipartimento
di Matematica, Universit\`a di Bologna, P.zza di Porta S. Donato
5, I-$40126$ Bologna, Italia, tel. +39-051-2094478, fax.
+39-051-2094490} \email{frosini@dm.unibo.it}


\subjclass[2010]{Primary 55N35; Secondary 68T10, 68U05, 55N05}

\date{\today} 


\keywords{Multidimensional rank invariant, Size Theory, Topological Persistence}

\begin{abstract}
Persistent Topology studies topological features of shapes
by analyzing the lower level sets of suitable functions, 
called filtering functions, and encoding the arising information 
in a parameterized version of the Betti numbers, i.e. the ranks
of persistent homology groups. Initially introduced by considering real-valued filtering functions, Persistent Topology has been subsequently generalized to a multidimensional setting, i.e. to the case of $\R^n$-valued filtering functions, leading to studying the ranks of multidimensional homology groups. 
In particular, a multidimensional matching distance has been defined, in order to compare these ranks. The definition of the multidimensional matching distance is based on foliating the domain of the ranks of multidimensional 
homology groups by a collection of half-planes, and hence it formally depends on a subset of $\R^n\times\R^n$ inducing a parameterization of these half-planes. It happens that it is possible to choose this subset in an infinite number of different ways. In this paper we show that the multidimensional matching distance is actually invariant with respect to such a choice. 
\end{abstract}

\maketitle

\section*{Introduction}
In the last two decades Persistent Topology has been introduced and studied to describe stable properties of sublevel sets of topological spaces endowed with real valued functions, representing shape or topological characteristics of real objects \cite{EdLeZo02,Fr91,Ro99}. In order to compare them, the concept of persistent homology group has been defined, and the scientific community has become more and more interested in this subject, both from the theoretical 
\cite{BiDeFaFrGiLaPaSp08,Ca09,EdHa08,Gh08} and the applicative point of view 
\cite{BiGiSpFa08,BuKi07,CaZoCoGu05,CeFeGi06,ChCoGuMeOu09,DeGh07,DiFrPa04,KaMiMr04,UrVe97}.

In recent years, this setting has been extended to manage properties that are described by $\mathbb{R}^n$-valued functions \cite{BiCeFrGiLa08,CaDiFe,CaZo09}, representing multidimensional 
properties (e.g., the color). Such an extension has revealed to be a spring of new interesting mathematical problems, and the search of their solutions has stimulated the introduction of new ideas. One of these ideas is represented by the so-called \emph{foliation method}, consisting in foliating the domain of the ranks of persistent homology groups by means of a family of half-planes \cite{BiCeFrGiLa08,CaDiFe,CeFr10}. An important consequence of this approach has been the recent introduction of a multidimensional matching distance between the ranks of persistent homology groups and the proof of its stability \cite{CeDiFeFrLa09}, opening the way to the application of multidimensional 
persistent homology in shape comparison.

However, the definition of the multidimensional matching distance formally depends on a subset of $\R^n\times\R^n$, inducing a parameterization of the half-planes in the considered collection. Since such a subset can be chosen in an infinite number of different ways, it follows that, in principle, each particular choice could lead to a different matching distance.
       
In this paper we solve this problem, proving that, in fact, the 
matching distance is independent of the chosen subset of parameters in $\R^n\times\R^n$. Beyond its own theoretical interest, this result allows us to change the parameterization in order to make our computations easier \cite{BiCeFrGi10}, without any change in our mathematical setting. 

\section{Preliminary definitions and results}
In this paper, each considered space is assumed to be
triangulable, i.e. there is a finite simplicial complex with
homeomorphic underlying space. In particular, triangulable spaces
are always compact and metrizable.

The following relations $\preceq$ and $\prec$ are defined in
$\R^n$: for $\vec u=(u_1,\dots,u_n)$ and $\vec v=(v_1,\dots,v_n)$,
we say $\vec u\preceq\vec v$ (resp. $\vec u\prec\vec v$) if and
only if $u_i\leq\ v_i$ (resp. $u_i<v_i$) for every index
$i=1,\dots,n$. Moreover, $\R^n$ is endowed with the usual
$\max$-norm: $\|(u_1,u_2,\dots,u_n)\|_{\infty}=\max_{1\leq i\leq
n}|u_i|$.

We shall use the following notations: $\Delta^+$ will be the open
set $\{\left(\vec u,\vec v\right)\in\R^n\times\R^n:\vec u\prec\vec v\}$.
Given a triangulable space $X$, for every $n$-tuple $\vec
u=(u_1,\dots,u_n)\in\R^n$ and for every function
$\vec\p:X\to\R^n$, we shall denote by $X\langle\fr\preceq \vec
u\,\rangle$ the set $\{x\in X:\varphi_i(x)\leq u_i,\
i=1,\dots,n\}$. Any function $\vec\varphi$ will said to be a 
{\em $n$-dimensional filtering (or measuring) function}.

The definition below extends the concept of the persistent
homology group to a multidimensional setting, i.e. to the case 
of filtering functions taking values in $\R^n$.

\begin{defn}
Let $\imath^{\left(\vec u,\vec v\right)}_k:\check{H}_k(X\langle\vec\p\preceq\vec
u\rangle)\rightarrow \check{H}_k(X\langle\vec\p\preceq\vec
v\rangle)$ be the homomorphism induced by the inclusion map
$\imath^{\left(\vec u,\vec v\right)}:X\langle\vec\p\preceq\vec
u\rangle\hookrightarrow X\langle\vec\p\preceq\vec v\rangle$ with
$\vec u\preceq\vec v$, where $\check{H}_k$ denotes the $k$th
\v{C}ech homology group. If $\vec u\prec\vec v$, the image of
$\imath^{\left(\vec u,\vec v\right)}_k$ is called the {\em multidimensional
$k$th persistent homology group of $(X,\vec\p)$ at $\left(\vec u, \vec
v\right)$}, and is denoted by $\check{H}_k^{\left(\vec u, \vec
v\right)}(X,\vec\p)$.
\end{defn}

In other words, the group $\check{H}_k^{\left(\vec u, \vec
v\right)}(X,\vec\p)$ contains all and only the $k$-homology classes of
cycles ``born'' before $\vec u$ and ``still alive'' at $\vec v$.

For details about \v{C}ech homology, the reader can refer to
\cite{EiSt}.

In what follows, we shall work with coefficients in a field
$\mathbb{K}$, so that homology groups are vector spaces, and hence
torsion-free. Therefore, they can be completely described by their
rank, leading to the following definition (cf. \cite{CaZo09,CeDiFeFrLa09}).

\begin{defn}[$k$th rank invariant]\label{Rank}
Let $X$ be a triangulable space, and $\vec\varphi:X\to\R^n$ a continuous
function. Let $k\in\mathbb{Z}$. The {\em $k$th rank invariant of
the pair $(X,\fr)$ over a field $\mathbb{K}$} is the function
$\rho_{(X,\fr),k}:\Delta^+\to\N$ defined as
$$
\rho_{(X,\vec\varphi),k}\left(\vec u,\vec v\right)=\rank \,\imath^{\left(\vec u,\vec v\right)}_k.
$$
\end{defn}

By the rank of a homomorphism we mean the dimension of its image. 
We observe that, in general, the rank invariant of a pair $(space, filtering\  function)$ can assume infinite value. On the other hand, in \cite{CeDiFeFrLa09} it has been proved that, under our assumptions on $X$ and $\vec\p$, the value $\infty$ is never attained by $\rho_{(X,\vec\varphi),k}$. Therefore, Definition \ref{Rank} is definitely well posed.

\subsection{The particular case $n=1$}\label{PartCase1}

Let us now analyze in a bit more detail the $1$-dimensional case, i.e. 
when the filtering function is real-valued.
Indeed, Persistent Topology has been widely
developed in this setting \cite{BiDeFaFrGiLaPaSp08}. 
For what concerns the particular case $k=0$, i.e. 
the case of homology of degree $0$, it was first introduced in 
the '90s under the name of {\em Size Theory} 
(see, e.g., \cite{Fr91,FrLa99,VeUrFrFe93}).

According to the terminology used in the literature about the case
$n=1$, the symbols $\vec\varphi$, $\vec u$, $\vec v$, $\prec$, $\preceq$ will be
replaced respectively by $\varphi$, $u$, $v$, $<$, $\leq$. 
Moreover, let us introduce some further notations: 
$\Delta=\partial\Delta^+$, $\Delta^*=\Delta^+\cup\{(u,\infty):u\in\R\}$, and
$\bar\Delta^*=\Delta^*\cup\Delta$. Finally, we write
$\|\p\|_\infty$ for $\max_{x\in X}|\p(x)|$.

When referring to a real valued filtering function 
$\varphi:X\to\R$, the rank invariant $\rho_{(X,\varphi),k}$
turns out to be a function defined over the open subset of the real plane
given by $\{(u,v)\in\R^2:u<v\}$, taking each point $(u,v)$ of the domain into the number 
of $k$-homology classes of cycles ``born'' before $u$ and ``still alive'' at $v$.
%

Figure \ref{FigEs} shows an example of a topological space $X$, 
endowed with a filtering function $\varphi:X\to\R$ (Figure \ref{FigEs}$(a)$),
together with the $0$th rank invariant $\rho_{(X,\varphi),0}$ (Figure \ref{FigEs}$(b)$). 
In this case, $X$ is the curve drawn by a solid line, and
$\varphi$ is the ordinate function.

\begin{figure}
\psfrag{M}{$X$} \psfrag{F}{\!$\varphi$}
\psfrag{N}{\!\!\!$\rho_{(X,\varphi),0}$}
\psfrag{(A)}{$(a)$}\psfrag{(E)}{$(b)$}
\psfrag{x}{$u$}\psfrag{y}{$v$}
\psfrag{a}{\textcolor[rgb]{0.00,0.00,1.00}{\!$c$}}
\psfrag{b}{\textcolor[rgb]{0.00,0.00,1.00}{\!$d$}}
\begin{center}
\includegraphics[height=4cm]{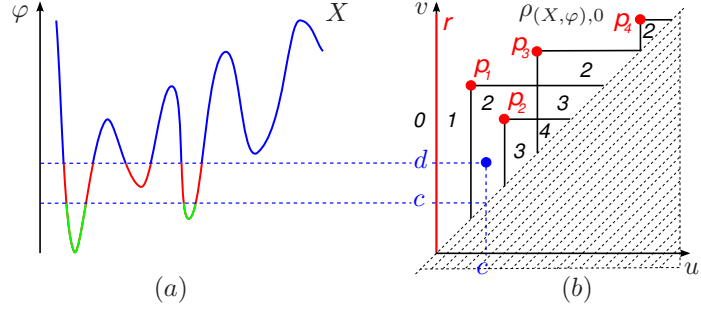}
\end{center}
\caption{$(a)$ The space $X$ and the
filtering function $\varphi$. $(b)$ The associated rank invariant
$\rho_{(X,\varphi),0}$.}\label{FigEs}
\end{figure}

As can be seen, the domain $\Delta^+=\{(u,v)\in\R^2:u<v\}$ is
divided into regions. Each one is labeled by a number, coinciding with
the constant value that $\rho_{(X,\varphi),0}$ takes in the interior of that region. 
For example, the value of $\rho_{(X,\varphi),0}$ at the point $(c,d)$ is equal to $2$. 

Due to its typical structure, it has been proved that the
information conveyed by a $1$-dimensional rank invariant can be
described in a very simple and compact way (cf. \cite{CeDiFeFrLa09,CoEdHa05,FrLa01,LaFr97}). 
More precisely, under the present assumption on $X$ and $\varphi$, 
and making use of \v{C}ech homology, in \cite{CeDiFeFrLa09} the authors show that 
each $1$-dimensional rank invariant can be compactly described by a multiset
of points, proper and at infinity, of the real plane, called respectively {\em proper cornerpoints} 
and {\em cornerpoints at infinity (or cornerlines)} and
defined as follows:

\begin{defn}[Proper cornerpoint]\label{Proper}
For every point $p=(u,v)\in\Delta^+$, we define the number
$\mu_k(p)$ as the minimum over all the positive real numbers
$\varepsilon$, with $u+\varepsilon<v-\varepsilon$, of
$$
\rho_{(X,\p),k}(u+\varepsilon,v-\varepsilon)-\rho_{(X,\p),k}(u-\varepsilon,v-\varepsilon)
-\rho_{(X,\p),k}(u+\varepsilon,v+\varepsilon)+\rho_{(X,\p),k}(u-\varepsilon,v+\varepsilon).
$$
The number $\mu_k(p)$ will be called the \emph{multiplicity} of
$p$ for $\rho_{(X,\p),k}$. Moreover, we shall call a {\em proper
cornerpoint for $\rho_{(X,\p),k}$} any point $p\in\Delta^+$ such
that the number $\mu_k(p)$ is strictly positive.
\end{defn}

\begin{defn}[Cornerpoint at infinity]\label{Cornerpoint}
For every vertical line $r$, with equation $u=\bar u$, $\bar
u\in\R$, let us identify $r$ with $(\bar u,\infty)\in\Delta^*$,
and define the number $\mu_k(r)$ as the minimum over all the
positive real numbers $\varepsilon$, with $\bar
u+\varepsilon<1/\varepsilon$, of
$$
\rho_{(X,\p),k}\left(\bar
u+\varepsilon,\frac{1}{\varepsilon}\right)-\rho_{(X,\p),k}\left(\bar
u-\varepsilon,\frac{1}{\varepsilon}\right).
$$
The number $\mu_k(r)$ will be called the \emph{multiplicity} of
$r$ for $\rho_{(X,\p),k}$. When this finite number is strictly
positive, we call $r$ a {\em cornerpoint at infinity for
$\rho_{(X,\p),k}$}.
\end{defn}

%
%

The concept of cornerpoint allows us to introduce a representation
of the rank invariant, based on the following definition
\cite{CeDiFeFrLa09,CoEdHa07}.

\begin{defn}[Persistence diagram]
The {\em persistence diagram} $D_k(X,\p)\subset\bar\Delta^*$ is
the multiset of all cornerpoints (both proper and at infinity) for
$\rho_{(X,\p),k}$, counted with their multiplicity, union the
points of $\Delta$, counted with infinite multiplicity.
\end{defn} 
 
The fundamental role of persistent diagrams is explicitly shown 
in the following Representation Theorem \ref{k-triangle} \cite{CeDiFeFrLa09,CoEdHa05}, 
claiming that they uniquely determine $1$-dimensional rank invariants (the
converse also holds by definition of persistence diagram). 

\begin{theorem}[Representation Theorem]\label{k-triangle}
For every $(\bar u,\bar v)\in\Delta^+$, we have
$$
\rho_{(X,\p),k}(\bar u,\bar v)=\sum_{\newatop{(u,v)\in\Delta^*}{ 
u\leq\bar u,\,v>\bar v}}\mu_k((u,v)).
$$
\end{theorem}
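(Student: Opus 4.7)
My plan is to prove the Representation Theorem by first establishing strong structural properties of the rank invariant and then identifying cornerpoints with the generators of an associated persistence module. Under the standing hypotheses ($X$ triangulable, $\varphi$ continuous), the function $\rho_{(X,\p),k}$ is a non-negative integer-valued function on $\Delta^+$ which is non-decreasing in $u$ and non-increasing in $v$; this follows by factoring inclusions $X\langle\p\preceq u\rangle\hookrightarrow X\langle\p\preceq u'\rangle\hookrightarrow X\langle\p\preceq v\rangle$ and using that the image of a composition is contained in the image of its last factor. The first substantial step would be to show that $\rho_{(X,\p),k}$ is locally constant outside a locally finite union of horizontal and vertical lines in $\Delta^+$. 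This tameness can be obtained by exploiting the triangulability of $X$: one approximates $\varphi$ uniformly by piecewise linear functions $\varphi_\eps$ on a fine subdivision of a fixed triangulation, so that the sublevel filtration reduces to a filtration of a finite simplicial complex with only finitely many critical values.

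For such a PL filtration the associated persistence module is finitely generated and decomposes uniquely, by the classification theorem for pointwise finite-dimensional persistence modules, as a finite direct sum of interval summands $\mathbb{K}[a_i,b_i)$ with $b_i\in\R\cup\{+\infty\}$. Each summand contributes exactly $1$ to $\rho_{(X,\p_\eps),k}(\bar u,\bar v)$ precisely when $a_i\leq\bar u<\bar v<b_i$. The next step is to verify that the pairs $(a_i,b_i)$ with $b_i<\infty$ are exactly the proper cornerpoints, with multiplicity correctly counted by the four-term alternating sum of Definition~\ref{Proper} (the inner bracket $\rho(u_0+\eps,v-\eps)-\rho(u_0-\eps,v-\eps)$ counts intervals born near $u_0$ and still alive at height $v$, and the second difference isolates those that die between $v_0-\eps$ and $v_0+\eps$), while the summands with $b_i=\infty$ correspond to cornerlines $u=a_i$ in the sense of Definition~\ref{Cornerpoint}. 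Summing contributions then yields the claimed formula for the PL case.

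The passage from PL to continuous $\varphi$ would rely on the continuity of \v{C}ech homology under nested intersections of compacta: realizing $X\langle\p\preceq u\rangle$ as an inverse limit of sublevel sets of PL approximations $\varphi_\eps$, persistent \v{C}ech homology passes to the limit. A stability argument in the spirit of \cite{CeDiFeFrLa09} would then show that the multiplicity function $\mu_k$ and the list of cornerpoints and cornerlines are preserved under this limit, upgrading the formula from the PL to the general continuous setting.

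The step I expect to be the main obstacle is controlling the tameness of $\rho_{(X,\p),k}$ in the general continuous case: even with $X$ triangulable, a continuous $\varphi$ may have a Cantor-like set of critical values, so one must carefully verify that the cornerpoints form a multiset which is at most countable and locally finite in $\Delta^+$, and that the infimum defining $\mu_k$ is actually attained for all sufficiently small $\eps>0$. This is precisely where the choice of \v{C}ech homology (rather than singular) becomes essential, since \v{C}ech homology is well-behaved under the relevant inverse limits and is what guarantees the finiteness of $\rho_{(X,\p),k}$ recalled after Definition~\ref{Rank}.
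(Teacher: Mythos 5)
First, note that this paper does not prove Theorem \ref{k-triangle}: it is recalled from \cite{CeDiFeFrLa09,CoEdHa05}, so there is no internal proof to compare yours against. Measured against the argument actually given in those references, your route is genuinely different: they never invoke an interval decomposition of a persistence module. Instead they work directly with $\rho_{(X,\p),k}$, proving (i) a monotonicity/superadditivity lemma showing that every alternating sum of the form appearing in Definition \ref{Proper} is a non-negative integer and is non-increasing as $\eps$ decreases, so that the minimum defining $\mu_k$ is attained; (ii) a local finiteness lemma bounding the total multiplicity of cornerpoints in a compact rectangle by a difference of two values of $\rho_{(X,\p),k}$; and (iii) the representation formula itself by a telescoping count over a grid shrinking to $(\bar u,\bar v)$, together with the one-sided continuity of $\rho_{(X,\p),k}$ guaranteed by the use of \v{C}ech homology. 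That approach buys exactly what your plan is missing: it needs no tameness hypothesis on $\p$, so Cantor-like sets of critical values cause no trouble.

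The concrete gap in your proposal is the passage from the PL case to the continuous case, which you defer to ``a stability argument in the spirit of \cite{CeDiFeFrLa09}.'' This step is not a technicality, it is the whole theorem, and as stated it is circular: the bottleneck stability of persistence diagrams is itself proved in \cite{CoEdHa05} and \cite{CeDiFeFrLa09} using the Representation Theorem (there called the $k$-triangle Lemma) via the Box and Interpolation Lemmas. Even setting circularity aside, bottleneck convergence of the diagrams of $\p_\eps$ does not by itself yield the formula in the limit: the right-hand side sums $\mu_k$ over the half-open quadrant $u\le\bar u$, $v>\bar v$, and cornerpoints of $\p_\eps$ may cross the boundary lines $u=\bar u$ and $v=\bar v$ as $\eps\to 0$, changing the count; one also needs the pointwise convergence $\rho_{(X,\p_\eps),k}(\bar u,\bar v)\to\rho_{(X,\p),k}(\bar u,\bar v)$, which can fail at discontinuity points of $\rho_{(X,\p),k}$. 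Finally, you nowhere establish that the infimum in Definitions \ref{Proper} and \ref{Cornerpoint} is attained for the limit function, which is needed even to make sense of the right-hand side. To repair the argument you would have to prove the monotonicity and local finiteness lemmas for continuous $\p$ directly, at which point the PL approximation becomes unnecessary.
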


Roughly speaking, the Representation Theorem
\ref{k-triangle} claims that the value assumed by 
$\rho_{(X,\p),k}$ at a point $(\bar u,\bar v)\in\Delta^+$ equals the number of
cornerpoints lying above and on the left of $(\bar u,\bar v)$. By
means of this theorem we are able to compactly represent
$1$-dimensional rank invariants as multisets of cornerpoints
and cornerpoints at infinity, i.e. as persistent diagrams. For example,
the $1$-dimensional rank invariant shown in Figure \ref{FigEs}$(b)$ admits
persistent diagram associated to the multiset given by $r,p_1,p_2,p_3,p_4$, 
where $r$ is the only cornerpoint at infinity,
with coordinates $(0,\infty)$, and each element has multiplicity equal to $1$.

As a consequence of the Representation Theorem \ref{k-triangle}
any distance between persistence diagrams induces a distance
between one-dimensional rank invariants. This justifies the
following definition \cite{CeDiFeFrLa09,CoEdHa07,dAFrLa}. 

\begin{defn}[Matching distance]\label{MatchingDistance}
Let $X$ be a triangulable space endowed with continuous functions
$\p,\s:X\to\R$. The {\em matching distance} $d_{match}$ between
$\rho_{(X,\p),k}$ and $\rho_{(X,\s),k}$ is defined to be the bottleneck
distance between $D_k(X,\p)$ and $D_k(X,\s)$, i.e.
\begin{eqnarray}\label{DistMatch}
d_{match}\left(\rho_{(X,\p),k},\rho_{(X,\s),k}\right)=\inf_{\gamma}\max_{p\in
D_k(X,\p)}\|p-\gamma(p)\|_{\widetilde{\infty}},
\end{eqnarray}
where $\gamma$ ranges over all multi-bijections (i.e. bijections between multisets) between
$D_k(X,\p)$ and $D_k(X,\s)$, and for every $p=(u,v),q=(u',v')$ in
$\Delta^*$,
$$
\|p-q\|_{\widetilde{\infty}}=
\min\left\{\max\left\{|u-u'|,|v-v'|\right\},\max\left\{\frac{v-u}{2},\frac{v'-u'}{2}\right\}\right\},
$$
with the convention about points at infinity that
$\infty-y=y-\infty=\infty$ when $y\neq\infty$, $\infty-\infty=0$,
$\frac{\infty}{2}=\infty$, $|\infty|=\infty$, $\min\{c,\infty\}=c$
and $\max\{c,\infty\}=\infty$.
\end{defn}

In plain words, $\|\cdot\|_{\widetilde{\infty}}$ measures the
pseudodistance between two points $p$ and $q$ as the minimum
between the cost of moving one point onto the other and the cost
of moving both points onto the diagonal, with respect to the
max-norm and under the assumption that any two points of the
diagonal have vanishing pseudodistance (we recall that a pseudodistance 
$d$ is just a distance missing the condition $d(X,Y)=0\Rightarrow X=Y$, 
i.e. two distinct elements may have vanishing distance with respect to $d$).
 
An application of the matching distance is given by Figure
\ref{DistMatch22}$(c)$. 

\begin{figure}[ht]
\psfrag{(A)}{$(a)$}\psfrag{(B)}{$(b)$}\psfrag{(C)}{$(c)$}
\psfrag{x}{$u$}\psfrag{y}{\!$v$}\psfrag{optimal}{\!\!\!\!{\footnotesize matching}}\psfrag{matching}{}
\begin{center}
\includegraphics[width=0.8\textwidth]{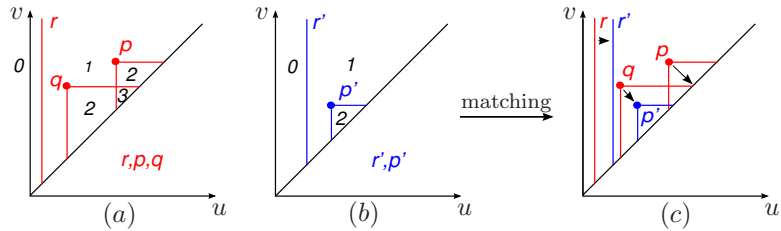}
\end{center}
\caption{$(a)$ The rank invariant corresponding to the persistent 
diagram given by $r,p,q$. $(b)$ The rank invariant corresponding 
to the persistent diagram given by $r',p'$. $(c)$ The matching between 
the two persistent diagrams, realizing the matching distance between 
the two rank invariants.}\label{DistMatch22}
\end{figure}

As can be seen by this example, different
$1$-dimensional rank invariants may in general have a different
number of cornerpoints. Therefore $d_{match}$ allows a proper
cornerpoint to be matched to a point of the diagonal: this
matching can be interpreted as the destruction of a proper
cornerpoint.  Furthermore, we stress that the matching distance is
stable with respect to perturbations of the filtering functions,
as the following Matching Stability Theorem states:

\begin{theorem}[One-Dimensional Stability Theorem]\label{StabilityTheorem}
Assume that $X$ is a triangulable space, and $\p,\s:X\to\R$ are two continuous
functions. Then it holds that
$d_{match}(\rho_{(X,\p),k},\rho_{(X,\s),k})\leq\|\p-\s\|_\infty$.
\end{theorem}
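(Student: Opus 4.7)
The plan is to reduce the stability claim to an interleaving between the sublevel set filtrations of $\p$ and $\s$, and then deduce the bottleneck bound by constructing an explicit multi-bijection of cornerpoints via the Representation Theorem \ref{k-triangle}.

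Set $\eps=\|\p-\s\|_\infty$; the case $\eps=0$ is trivial, so assume $\eps>0$. The pointwise bound $|\p(x)-\s(x)|\le\eps$ yields, for every $u\in\R$, the chain of inclusions $X\langle\p\le u-\eps\rangle\subseteq X\langle\s\le u\rangle\subseteq X\langle\p\le u+\eps\rangle$, together with its counterpart obtained by exchanging $\p$ and $\s$. Applying $\check H_k$ to the four-term chain
\[
X\langle\p\le u-\eps\rangle\ \subseteq\ X\langle\s\le u\rangle\ \subseteq\ X\langle\s\le v\rangle\ \subseteq\ X\langle\p\le v+\eps\rangle
\]
and using that the rank of a composition is bounded by the rank of each factor, one obtains the interleaving inequality
\[
\rho_{(X,\p),k}(u-\eps,v+\eps)\le\rho_{(X,\s),k}(u,v)\le\rho_{(X,\p),k}(u+\eps,v-\eps),
\]
valid for every $(u,v)\in\Delta^+$ with $u+\eps<v-\eps$ (the second inequality coming from the symmetric chain). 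This is the only geometric input of the argument.

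Plugging the interleaving into the second-difference formulas of Definitions \ref{Proper} and \ref{Cornerpoint} yields a \emph{box lemma}: for every open rectangle $R=(a,b)\times(c,d)\subset\Delta^+$ whose $\eps$-enlargement $R^\eps=(a-\eps,b+\eps)\times(c-\eps,d+\eps)$ still lies in $\Delta^+$, the total multiplicity of cornerpoints of $D_k(X,\s)$ inside $R$ is dominated by the total multiplicity of cornerpoints of $D_k(X,\p)$ inside $R^\eps$, with a symmetric statement after swapping $\p$ and $\s$ and an analogous statement on half-strips controlling cornerpoints at infinity. With this in hand, the matching $\gamma$ realizing $d_{match}(\rho_{(X,\p),k},\rho_{(X,\s),k})\le\eps$ is produced by a Hall-type marriage argument on the bipartite multigraph whose vertices are the cornerpoints of $D_k(X,\p)$ and $D_k(X,\s)$ and whose edges join pairs at $\|\cdot\|_{\widetilde\infty}$-pseudodistance at most $\eps$. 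Since $\|\cdot\|_{\widetilde\infty}$ treats $\Delta$ as a single point of zero pseudodistance, the diagonal plays the role of an unlimited sink absorbing every cornerpoint whose $\|\cdot\|_{\widetilde\infty}$-distance from $\Delta$ does not exceed $\eps$, and the box lemma is precisely the combinatorial condition needed to verify Hall's criterion on every finite subfamily of cornerpoints.

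The main obstacle is the passage from this local box lemma to a \emph{global} matching: persistence diagrams may contain infinitely many cornerpoints accumulating on $\Delta$, so Hall's theorem cannot be invoked directly. The standard workaround is to truncate, at each level $\delta>0$, to the finite subfamily of cornerpoints having both multiplicity at least $\delta$ and $\|\cdot\|_{\widetilde\infty}$-distance at least $\delta$ from $\Delta$, build a finite matching at that level, and extract a limit multi-bijection by a diagonal/compactness argument as $\delta\to 0^+$. Additional bookkeeping is required for cornerpoints at infinity, which must be paired among themselves according to abscissa (with the convention $\infty-\infty=0$), but this is handled along the same lines.
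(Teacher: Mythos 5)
The paper does not actually prove Theorem \ref{StabilityTheorem}: it defers to \cite{CeDiFeFrLa09,dAFrLabis,dAFrLa} and to \cite{ChCoGlGuOu09,CoEdHa05}, so your attempt can only be measured against those published arguments. Your overall strategy (sublevel-set interleaving $\Rightarrow$ rank inequalities $\Rightarrow$ box lemma $\Rightarrow$ matching $\Rightarrow$ limit over finite truncations) is indeed the standard modern route, and the first two steps are correct: the inclusions $X\langle\p\le u-\eps\rangle\subseteq X\langle\s\le u\rangle\subseteq X\langle\p\le u+\eps\rangle$ and the factorization of induced maps do give the stated rank inequalities, and these do yield the single-box estimate via the second-difference formulas.

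The genuine gap is the sentence claiming that ``the box lemma is precisely the combinatorial condition needed to verify Hall's criterion on every finite subfamily of cornerpoints.'' It is not. Hall's condition must be checked for an \emph{arbitrary} finite multiset $A$ of cornerpoints of one diagram lying farther than $\eps$ from $\Delta$, and its $\eps$-neighbourhood is a \emph{union} of boxes whose $\eps$-enlargements may overlap; summing the single-box inequality over disjoint small boxes around the points of $A$ then double-counts cornerpoints of the other diagram and does not bound the multiplicity of the union from below. (A configuration with two cornerpoints of $\p$ at mutual distance $\eps$ and a single cornerpoint of $\s$ within $\eps$ of both satisfies every single-box inequality centred at either point, yet violates Hall's condition; only a union-of-boxes estimate rules it out.) Closing this gap is exactly the hard part of every published proof: Cohen-Steiner--Edelsbrunner--Harer avoid it by an interpolation argument (subdivide $f_t=(1-t)\p+t\s$ finely enough that each step is smaller than the minimal feature separation, where the matching is forced, and compose using the triangle inequality for the bottleneck distance), while the module-theoretic proofs establish the union-of-boxes inequality directly from the interleaving \emph{homomorphisms} applied to direct sums over several rectangles, not merely from the rank inequalities you use. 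As written, your argument does not contain either of these ingredients, so the passage from the box lemma to the existence of the matching is unsupported. The final limiting argument for diagrams with cornerpoints accumulating on $\Delta$ is acknowledged honestly and is acceptable as a sketch, but it rests on the unproved matching step.
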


For a proof of the previous theorem and more details about the
matching distance the reader is referred to \cite{CeDiFeFrLa09,dAFrLabis,dAFrLa} 
(see also \cite{ChCoGlGuOu09,CoEdHa05} for the bottleneck distance). 

\subsection{The {\boldmath $n$}-dimensional case}\label{nDim}

Let us go back to consider a filtering function $\vec\varphi:X\to\R^n$.
In \cite{BiCeFrGiLa08,CaDiFe}, the authors show that the case $n>1$ 
can be reduced to the $1$-dimensional setting by a change of 
variable and the use of a suitable foliation of $\Delta^+$,
as can be seen in what follows. We shall refer to such a procedure as to the \emph{foliation method}.

Let us start by recalling that the following  parameterized family of
half-planes in $\R^n\times\R^n$ is a foliation of $\Delta ^+$. 

\begin{defn}\label{FoliationDef} 
For every vector
$\vec{l}=(l_1,\ldots,l_n)$ in $\mathbb{R}^n$
with $l_i>0$ for $i=1,\dots,n$ and $\sum_{i=1}^n l_i^2=1$, and for every vector $\vec{b}=(b_1,\ldots,b_n)$ in
$\mathbb{R}^n$, such that $\sum_{i=1}^n b_i=0$, the pair 
$\left(\vec{l},\vec{b}\right)$ will be said {\em admissible}. 
We shall denote by $Adm_n$ the set of all admissible pairs in 
$\R^n\times\R^n$. For every admissible pair, the half-plane  
$\pi_{\left(\vec{l},\vec{b}\right)}\subseteq\Delta^+$ is defined 
by the parametric equations
$$
\left\{%
\begin{array}{ll}
    \vec u=s\vec l + \vec b\\
    \vec v=t\vec l + \vec b\\
\end{array}%
\right.,
$$
with $s,t\in\R$ and $s<t$.
\end{defn}

In what follows, we shall use the symbol $\pi_{\left(\vec l,\vec b\right)}$ when referring to such a half-plane as a set of points, and the symbol 
$\pi_{\left(\vec l,\vec b\right)}:\vec u=s\vec l + \vec b,\ \vec v=t\vec l + \vec b$ when referring to its parameterization.

\begin{rem}\label{FoliationRem}
It can be verified that the half-planes collection $\left\{\pi_{\left(\vec l,\vec b\right)}:\vec u=s\vec l+\vec b,\right.$ $\left.v=t\vec l+\vec b\ |\  s<t\right\}_{\left(\vec l,\vec b\right)\in Adm_n}$ is actually a foliation of $\Delta^+$. Therefore, it follows that for every $\left(\vec u, \vec v\right)\in\Delta^+$, there exists one and only one admissible pair $\left(\vec l,\vec b\right)\in Adm_n$ such that $\left(\vec u, \vec v\right)\in\pi_{\left(\vec l,\vec b\right)}$. Moreover, for every $\left(\vec l,\vec b\right)\in Adm_n$ the half-plane $\pi_{\left(\vec l,\vec b\right)}$ is a subset of $\Delta^+$\cite{BiCeFrGiLa08}.
\end{rem}

The key property of the foliation defined in Definition \ref{FoliationDef} 
is  that the restriction of $\rho_{(X,\fr),k}$ to each leaf can be seen as a particular
$1$-dimensional rank invariant, as the following theorem states.

\begin{theorem}[Reduction Theorem]\label{reduction}
Let $\left(\vec{l},\vec{b}\right)$ be an admissible pair, and $F_{\left(\vec
l,\vec b\right)}^{\fr}:X\rightarrow\R$ be defined by setting
$$
F_{\left(\vec l,\vec
b\right)}^{\fr}(x)=\max_{i=1,\dots,n}\left\{\frac{\varphi_i(x)-b_i}{l_i}\right\}\
.
$$
Then, for every $\left(\vec u,\vec v\right)=\left(s\vec l+\vec b,t\vec l + \vec
b\right)\in\pi_{\left(\vec{l},\vec{b}\right)}$ the following equality holds:
$$
\rho_{(X,\fr),k}\left(\vec u,\vec v\right)=\rho_{(X,F_{\left(\vec l,\vec
b\right)}^{\fr}),k}(s,t)\ .
$$
\end{theorem}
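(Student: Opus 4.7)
The plan is to reduce the statement to an elementary equality of sublevel sets. Once the two pairs of sublevel sets are identified, the induced maps on \v{C}ech homology are literally the same homomorphism, and the ranks agree by definition.

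First I would establish the key set-theoretic identity: for $\vec u = s\vec l + \vec b$,
\begin{equation*}
X\langle \fr \preceq \vec u\rangle \;=\; \bigl\{x\in X : F^{\fr}_{(\vec l,\vec b)}(x)\le s\bigr\}.
\end{equation*}
This is a straightforward chain of equivalences that uses only $l_i>0$: a point $x$ lies in the left-hand side iff $\varphi_i(x)\le s l_i+b_i$ for every $i=1,\dots,n$, which (dividing by the positive number $l_i$) is equivalent to $(\varphi_i(x)-b_i)/l_i\le s$ for every $i$, and in turn to $\max_i(\varphi_i(x)-b_i)/l_i\le s$, i.e.\ $F^{\fr}_{(\vec l,\vec b)}(x)\le s$. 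The condition $\sum_i b_i=0$ and the normalization $\sum_i l_i^2=1$ play no role here; what matters is only the positivity of the components of $\vec l$.

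Next I would repeat the same computation with $t$ in place of $s$ and $\vec v=t\vec l+\vec b$ in place of $\vec u$, obtaining
\begin{equation*}
X\langle \fr \preceq \vec v\rangle \;=\; \bigl\{x\in X : F^{\fr}_{(\vec l,\vec b)}(x)\le t\bigr\}.
\end{equation*}
Because $(\vec u,\vec v)\in\pi_{(\vec l,\vec b)}$ is in $\Delta^+$, we have $s<t$, hence $\vec u\prec\vec v$ and the inclusion $X\langle \fr\preceq\vec u\rangle\hookrightarrow X\langle\fr\preceq\vec v\rangle$ coincides, as a map of topological spaces, with the inclusion of the $s$-sublevel set of $F^{\fr}_{(\vec l,\vec b)}$ into its $t$-sublevel set.

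Finally, since these are literally the same inclusion map between the same pair of spaces, the induced homomorphisms $\imath^{(\vec u,\vec v)}_k$ (on the multidimensional side) and the corresponding $1$-dimensional persistent homology map (on the $F^{\fr}_{(\vec l,\vec b)}$ side) coincide as maps of \v{C}ech homology groups. Consequently they have identical images and identical ranks, which by Definition~\ref{Rank} gives
\begin{equation*}
\rho_{(X,\fr),k}(\vec u,\vec v) \;=\; \rho_{(X,F^{\fr}_{(\vec l,\vec b)}),k}(s,t),
\end{equation*}
proving the theorem. There is no genuine obstacle: the whole argument rests on the trivial observation that a conjunction of affine inequalities parameterized by a single real variable along a ray with positive direction vector is equivalent to a single inequality on the max of the reparameterized coordinates.
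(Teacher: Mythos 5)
Your argument is correct and complete: the identity $X\langle\fr\preceq s\vec l+\vec b\,\rangle=\{x\in X: F^{\fr}_{(\vec l,\vec b)}(x)\le s\}$, which uses only $l_i>0$, makes the two inclusions of sublevel sets literally the same map, so the induced homomorphisms and their ranks coincide. The paper itself states the Reduction Theorem without proof, recalling it from the cited references, and your argument is precisely the standard one given there; your remark that the normalizations $\sum_i l_i^2=1$ and $\sum_i b_i=0$ are irrelevant to this step is also accurate (they matter only for the uniqueness of the foliation parameterization, not for the reduction itself).
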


In the following, we shall use the symbol $\rho_{(X,F_{\left(\vec l,\vec
b\right)}^{\fr}),k}$ in the sense of the Reduction Theorem \ref{reduction}.

As a consequence of the Reduction Theorem \ref{reduction}, we
observe that the identity
$\rho_{(X,\fr),k}\equiv\rho_{(X,\vec{\psi}),k}$ holds if and only
if $d_{match}(\rho_{(X,F_{\left(\vec l,\vec
b\right)}^{\fr}),k},\rho_{(X,F_{\left(\vec l,\vec b\right)}^{\vec\psi}),k})=0$, for
every admissible pair $\left(\vec{l},\vec{b}\right)$.

Furthermore, the Reduction Theorem \ref{reduction} allows us to represent a
multidimensional rank invariant $\rho_{(X,\fr),k}$ by a
collection of persistent diagrams, following the
machinery described in Subsection \ref{PartCase1} for the case $n=1$.
Indeed, each admissible pair $\left(\vec l,\vec b\right)$ can be associated
with a persistent diagram $D_k(X,F_{\left(\vec l,\vec
b\right)}^{\fr})$ describing the
$1$-dimensional rank invariant $\rho_{(X,F_{\left(\vec l,\vec
b\right)}^{\fr}),k}$. Therefore, for every half-plane in 
$\left\{\pi_{\left(\vec l,\vec b\right)}:\vec u=s\vec l+\vec b,\ \vec v=t\vec l+\vec b\ |\  s<t\right\}_{\left(\vec l,\vec b\right)\in Adm_n}$ the matching distance between $1$-dimensional rank invariants
can be applied, 
leading to the following definition of a proven stable
distance between two multidimensional rank invariants \cite{BiCeFrGiLa08,CeDiFeFrLa09}:

\begin{defn}\label{MultiDimDist}
Let $X$, $Y$ be two triangulable spaces, and let $\vec\varphi:X\to\R^n$, $\vec\psi:Y\to\R^n$ be two filtering functions. 
The {\em multidimensional matching distance} $D_{match}(\rho_{(X,\fr),k},\rho_{(Y,\vec\psi),k})$ 
is the (extended) distance defined by setting 
$$
D_{match}(\rho_{(X,\fr),k},\rho_{(Y,\vec\psi),k})=\sup_{\left(\vec
l,\vec b\right)\in Adm_n}\min_i l_i\cdot d_{match}(\rho_{(X,F_{\left(\vec
l,\vec b\right)}^{\fr}),k},\rho_{(Y,F_{\left(\vec l,\vec
b\right)}^{\vec\psi}),k})$$
\end{defn}

\begin{rem}\label{RemPseudoDist}
The term ``extended'' in Definition \ref{MultiDimDist} refers to the fact that, 
if the spaces $X$ and $Y$ are not assumed to be homotopically equivalent, 
the multidimensional matching distance $D_{match}$ still verifies
all the properties of a distance, except for the fact that it may
take the value $\infty$.

\end{rem}

\section{New results}

The above multidimensional Persistent Topology framework leads us to the following considerations. As can be seen in Definition \ref{MultiDimDist}, the concept of $D_{match}$ depends on the half-planes foliating $\Delta^+$, with particular reference to the set $Adm_n$ of admissible pairs. On the other hand, it is worth nothing that the choice of $Adm_n$ is completely arbitrary. 
Indeed, the machinery recalled in Section \ref{nDim} could be applied by taking into account any other set of parameters $\Lambda\times B\in\R^n\times\R^n$ such that the half-planes collection $\left\{\pi_{\left(\vec\lambda,\vec\beta\right)}:\vec u=s\vec\lambda+\vec\beta,\,\vec v=t\vec\lambda+\vec\beta\ |\  s<t\right\}_{\left(\vec\lambda,\vec\beta\right)\in\Lambda\times B}$ satisfies a property analogous to the one described in Remark \ref{FoliationRem}. More precisely, it is sufficient that 
for every $\left(\vec u, \vec v\right)\in\Delta^+$, there exists one and only one $\left(\vec\lambda,\vec\beta\right)\in\Lambda\times B$ with
$\left(\vec u, \vec v\right)\in\pi_{\left(\vec\lambda,\vec\beta\right)}$, and that for every $\left(\vec\lambda,\vec\beta\right)\in\Lambda\times B$, the half-plane $\pi_{\left(\vec\lambda,\vec\beta\right)}$ is a subset of $\Delta^+$.

In order to clarify our last assertion, let us consider, e.g., the set $Ladm_n\in\R^n\times\R^n$ containing the pairs $\left(\vec\lambda,\vec\beta\right)$ with $\vec\lambda=(\lambda_1,\dots,\lambda_n)$ such that $\sum_{i=1}^n\lambda_i=1$ and $\lambda_i>0$ for $i=1,\dots,n$, and $\vec\beta=(\beta_1,\dots,\beta_n)$ with $\sum_{i=1}^n\beta_i=0$. It can be shown that, in this case, for every $\left(\vec u, \vec v\right)\in\Delta^+$ there exists one and only one $\left(\vec\lambda,\vec\beta\right)\in Ladm_n$ such that $\left(\vec u, \vec v\right)\in\pi_{\left(\vec\lambda,\vec\beta\right)}$. To this aim, let us set $\lambda_i=\frac{v_i-u_i}{\sum_{j=1}^n(v_j-u_j)}$ and $ \beta_i=\frac{u_i\sum_{j=1}^n v_j-v_i\sum_{j=1}^n u_j}{\sum_{j=1}^n (v_j-u_j)}$ for every $i=1,\dots,n$. Obviously, for every $\left(\vec\lambda,\vec\beta\right)\in Ladm_n$ we have also that the half-plane $\pi_{\left(\vec\lambda,\vec\beta\right)}$ is a subset of $\Delta^+$.

As a consequence, an analogue of the Reduction Theorem \ref{reduction} can be proved, leading to a similar, but {\em formally different} version of multidimensional matching distance between rank invariants. More precisely, under the same hypotheses assumed in Definition \ref{MultiDimDist} for the spaces $X$, $Y$ and the filtering functions $\vec\varphi$,$\vec\psi$, we can define the distance $\widetilde D_{match}(\rho_{(X,\fr),k},\rho_{(Y,\vec\psi),k})$ 
by setting 
{\setlength\arraycolsep{2pt}
\begin{eqnarray}\label{MultiDimDistEq}
\ \widetilde D_{match}(\rho_{(X,\vec\varphi),k}&,&\rho_{(Y,\vec\psi),k})=\\
&&=\sup_{\left(\vec\lambda,\vec\beta\right)\in Ladm_n}\min_i\lambda_i\cdot d_{match}(\rho_{(X,F_{\left(\vec
\lambda,\vec\beta\right)}^{\fr}),k},\rho_{(Y,F_{\left(\vec\lambda,\vec\beta\right)}^{\vec\psi}),k}),\nonumber
\end{eqnarray}}
with $F_{\left(\vec\lambda,\vec
\beta\right)}^{\vec\varphi}=\max_{i=1,\dots,n}\left\{\frac{\varphi_i-\beta_i}{\lambda_i}\right\}$ and $F_{\left(\vec\lambda,\vec
\beta\right)}^{\vec\psi}=\max_{i=1,\dots,n}\left\{\frac{\psi_i-\beta_i}{\lambda_i}\right\}$.

Following these reasonings, we are quite naturally induced to wonder if $D_{match}$ and $\widetilde D_{match}$ are effectively different distances between multidimensional rank invariants. It is possible to prove that the answer to such a question is negative, that is, $D_{match}$ and $\widetilde D_{match}$ coincide. To see this, we can define a bijection between $Adm_n$ and $Ladm_n$, taking each $\left(\vec l,\vec b\right)\in Adm_n$ to the unique $\left(\vec\lambda,\vec\beta\right)\in Ladm_n$ with $\vec l=c\vec\lambda$ ($c\neq 0$), $\vec b=\vec\beta$, and prove that $\min_i l_i\cdot d_{match}(\rho_{(X,F_{\left(\vec
l,\vec b\right)}^{\fr}),k},\rho_{(Y,F_{\left(\vec l,\vec b\right)}^{\vec\psi}),k})=\min_i\lambda_i\cdot d_{match}(\rho_{(X,F_{\left(\vec
\lambda,\vec\beta\right)}^{\fr}),k},\rho_{(Y,F_{\left(\vec\lambda,\vec\beta\right)}^{\vec\psi}),k})$, with the last equality coming from a property of $d_{match}$ we shall formally prove in Proposition \ref{Prop3}.

Before going on, let us remark that the coincidence between $D_{match}$ and $\widetilde D_{match}$ has revealed to be useful in simplifying some technical details in a recent work concerning the effective computation of the multidimensional matching distance \cite{BiCeFrGi10}. Indeed, it allows us to substitute $Adm_n$ with $Ladm_n$ in order to make our computations easier, without any modification in our mathematical setting.

In the light of the previous example, our goal in what follows is to show that the same considerations hold for any ``admissible'' choice of the set of parameters $\Lambda\times B\in\R^n\times\R^n$. To be more precise, we shall prove that, if the half-planes collection $\left\{\pi_{\left(\vec\lambda,\vec\beta\right)}:\vec u=s\vec\lambda+\vec\beta,\ \vec v=t\vec\lambda+\vec\beta\ |\  s<t\right\}_{\left(\vec\lambda,\vec\beta\right)\in\Lambda\times B}$ actually foliates $\Delta^+$, then the induced matching distance (in the sense of the analogue of equation (\ref{MultiDimDistEq})) between multidimensional rank invariants always coincides with $D_{match}$ (Theorem \ref{InsightTotal}).

\subsection{1-dimensional rank invariants and monotonic changes of the associated filtering functions}
In order to provide the main theorem of this paper, let us first show some new results about the changes of $1$-dimensional rank invariants with respect to the composition of the associated filtering functions with a strictly increasing map (Proposition \ref{Prop1} and Proposition \ref{Prop2}). With particular reference to the case of composition with strictly increasing affine maps, we will show how these results affect 
the $1$-dimensional matching distance (Proposition \ref{Prop3}). 

\begin{prop}\label{Prop1}
Assume that $f:\R\to\R$ is a strictly increasing function. Then it follows that 
$\rho_{(X,\p),k}(u,v)=\rho_{(X,f\circ\p),k}(f(u),f(v))$, for every $(u,v)\in\Delta^+$.
\end{prop}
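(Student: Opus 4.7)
The proof should be essentially immediate once we identify the right sublevel-set equality. My plan is to reduce the claim to the fact that strictly increasing functions reflect the order relation, and then observe that the homomorphisms whose ranks define the two sides of the equation are literally the same map.

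First I would record the key biconditional: for a strictly increasing $f:\R\to\R$ and for any $a,b\in\R$ we have $a\leq b$ if and only if $f(a)\leq f(b)$. One direction uses that $f$ is monotone; the converse uses that if $a>b$ then strict monotonicity forces $f(a)>f(b)$, contradicting $f(a)\leq f(b)$. Applying this pointwise with $a=\p(x)$ and $b=u$, I would deduce that the sublevel sets coincide:
$$
X\langle\p\preceq u\rangle=\{x\in X:\p(x)\leq u\}=\{x\in X:f(\p(x))\leq f(u)\}=X\langle f\circ\p\preceq f(u)\rangle,
$$
and analogously for $v$ in place of $u$.

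Next, since $(u,v)\in\Delta^+$ means $u<v$, strict monotonicity of $f$ gives $f(u)<f(v)$, so $(f(u),f(v))\in\Delta^+$ as well, and the two inclusion maps $\imath^{(u,v)}:X\langle\p\preceq u\rangle\hookrightarrow X\langle\p\preceq v\rangle$ and $\imath^{(f(u),f(v))}:X\langle f\circ\p\preceq f(u)\rangle\hookrightarrow X\langle f\circ\p\preceq f(v)\rangle$ are literally the same inclusion of the same pair of subspaces of $X$. Consequently the induced homomorphisms $\imath^{(u,v)}_k$ and $\imath^{(f(u),f(v))}_k$ between the corresponding \v{C}ech homology groups coincide, and in particular have the same rank. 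Invoking Definition~\ref{Rank}, this gives exactly the identity $\rho_{(X,\p),k}(u,v)=\rho_{(X,f\circ\p),k}(f(u),f(v))$.

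There is no real obstacle here; the only point one must be careful about is that the biconditional $a\leq b\iff f(a)\leq f(b)$ truly requires \emph{strict} monotonicity of $f$ (a weakly increasing $f$ would only give the forward implication). Once that is in hand, the statement is a one-line consequence of the equality of sublevel sets.
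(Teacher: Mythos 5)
Your proof is correct and follows exactly the same route as the paper's: establish $(f(u),f(v))\in\Delta^+$ and the sublevel-set identity $X\langle\p\preceq u\rangle=X\langle f\circ\p\preceq f(u)\rangle$, from which the two induced homomorphisms coincide and the ranks are equal. You simply spell out the details (the biconditional $a\leq b\iff f(a)\leq f(b)$ and the identification of the inclusion maps) that the paper leaves as ``easily follows.''
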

\begin{proof}
Since $f:\R\to\R$ is a strictly increasing function, if $(u,v)\in\Delta^+$ then $(f(u),f(v))\in\Delta^+$. 
The claim easily follows by observing that 
$X\langle\varphi\preceq u\rangle=X\langle f\circ\varphi\preceq f(u)\rangle$, for every $u\in\R$.
%
%
\end{proof}

As a consequence of Proposition \ref{Prop1}, we have the following result (we skip the easy proof), 
stating that the composition of the considered filtering function with strictly increasing maps preserves 
the multiplicity of cornerpoints in the associated $1$-dimensional rank invariant.  
\begin{prop}\label{Prop2}
Assume that $f:\R\to\R$ is a strictly increasing function. 
Then, for every $(u,v)\in\Delta^+$ and $(\bar u,\infty)\in\Delta^*$, 
it holds that $\mu_k((u,v))=\mu_k((f(u),f(v)))$ and $\mu_k((\bar u,\infty))
=\mu_k((f(\bar u),\infty))$, respectively.
\end{prop}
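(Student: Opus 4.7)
The plan is to reduce Proposition \ref{Prop2} to Proposition \ref{Prop1} via a box-counting interpretation of the multiplicity furnished by the Representation Theorem \ref{k-triangle}.

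For a proper cornerpoint $(u,v)\in\Delta^+$, I would apply Proposition \ref{Prop1} to each of the four values of $\rho_{(X,\varphi),k}$ appearing in Definition \ref{Proper}. Since $f$ is strictly increasing, this rewrites the four-term alternating expression as
\begin{align*}
&\rho_{(X,f\circ\varphi),k}(f(u+\varepsilon),f(v-\varepsilon)) - \rho_{(X,f\circ\varphi),k}(f(u-\varepsilon),f(v-\varepsilon)) \\
&\quad - \rho_{(X,f\circ\varphi),k}(f(u+\varepsilon),f(v+\varepsilon)) + \rho_{(X,f\circ\varphi),k}(f(u-\varepsilon),f(v+\varepsilon)),
\end{align*}
i.e.\ the alternating sum of $\rho_{(X,f\circ\varphi),k}$ at the four corners of a (generally asymmetric) rectangle $R_\varepsilon$ about $(f(u),f(v))$.

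Next, I would invoke the Representation Theorem \ref{k-triangle} to recognise this alternating sum as the total multiplicity of cornerpoints of $\rho_{(X,f\circ\varphi),k}$ lying in $R_\varepsilon$. Such a box-count is monotone non-increasing under rectangle inclusion and integer-valued, so as the rectangle shrinks it stabilises at a value intrinsic to the limit set. Applying this observation both to the image rectangles $R_\varepsilon$ (as $\varepsilon\to 0^+$) and to the symmetric $\varepsilon'$-boxes entering Definition \ref{Proper} for $\rho_{(X,f\circ\varphi),k}$ at $(f(u),f(v))$ delivers the desired equality $\mu_k((u,v))=\mu_k((f(u),f(v)))$. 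The case of a cornerpoint at infinity is handled by the same strategy applied to the two-term expression in Definition \ref{Cornerpoint}, using that $f(1/\varepsilon)\to+\infty$ as $\varepsilon\to 0^+$.

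The main technical obstacle is the asymmetry of $R_\varepsilon$ about $(f(u),f(v))$: when $f$ fails to be continuous at $u$ or $v$, the nested intersection $\bigcap_\varepsilon R_\varepsilon$ is a non-degenerate rectangle rather than the single point $\{(f(u),f(v))\}$. This is overcome by noting that across any \emph{gap} of $f$ the sublevel filtration $a\mapsto X\langle f\circ\varphi\preceq a\rangle$ is locally constant, so $\rho_{(X,f\circ\varphi),k}$ cannot carry cornerpoints there; hence for all $\varepsilon$ sufficiently small, $R_\varepsilon$ contains only the cornerpoint at $(f(u),f(v))$ itself, and its box-count is precisely $\mu_k((f(u),f(v)))$.
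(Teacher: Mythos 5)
The paper itself omits this proof (``we skip the easy proof''), so there is no official argument to match; your route --- rewrite each of the four terms in Definition \ref{Proper} via Proposition \ref{Prop1}, interpret the alternating sum as a box count, and compare the image boxes $R_\varepsilon$ with the symmetric boxes at $(f(u),f(v))$ --- is surely the intended one. When $f$ is continuous (in particular in the affine case $f(x)=ax+b$, which is all that Proposition \ref{Prop3} and the main theorem ever use), the two families of boxes are mutually cofinal, the alternating sums are non-increasing under shrinking the box, and the two minima coincide; there your argument is complete. One small stylistic point: the non-negativity and monotonicity of the alternating sums can be obtained directly from the linear algebra of the maps $\imath^{(u,v)}_k$, without routing through the Representation Theorem \ref{k-triangle}; this matters below.

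For a general (possibly discontinuous) strictly increasing $f$, which is what the statement literally allows, your patch has a hole. First, the Representation Theorem is established in the paper only for continuous filtering functions, and $f\circ\varphi$ need not be continuous, so invoking it for $\rho_{(X,f\circ\varphi),k}$ is not licensed as written. Second, and more substantively, the observation that the filtration $c\mapsto X\langle f\circ\varphi\preceq c\rangle$ is locally constant ``across a gap'' only rules out cornerpoints whose coordinate lies in the \emph{open} gap $\left(f(u^-),f(u)\right)\cup\left(f(u),f(u^+)\right)$, where $f(u^\pm)$ denote the one-sided limits. The endpoints $f(u^-)$ and $f(u^+)$ belong to every image box $R_\varepsilon$, and at these values the filtration genuinely changes: at $c=f(u^-)$ the sublevel set jumps from $X\langle\varphi\preceq a\rangle$ (for $a<u$) to $\{x:\varphi(x)<u\}$, and at $c=f(u^+)$ from $\{x:\varphi(x)\leq u\}$ to $X\langle\varphi\preceq a\rangle$ (for $a>u$). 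Excluding cornerpoints there requires an extra argument --- that the rank of $\check{H}_k(\{\varphi<u\})\to\check{H}_k(\cdot)$ equals $\lim_{a\to u^-}\rho_{(X,\varphi),k}(a,\cdot)$ (compact supports for the increasing union) and the dual statement at $u^+$ (continuity of \v{C}ech homology for the decreasing intersection of compacta). Without this, the minimum over your boxes $R_\varepsilon$ could a priori exceed $\mu_k((f(u),f(v)))$. Either supply these two limiting arguments, or state and prove the proposition for continuous $f$, which is all the rest of the paper needs.
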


Let us now confine ourselves to the assumption that $f:\R\to\R$ is defined as $f(x)=ax+b$, 
with $a,b\in\R$ and $a>0$. In this case, from Definition \ref{MatchingDistance} and by applying 
Proposition \ref{Prop2} we obtain the next result about the matching distance 
between $1$-dimensional rank invariants.

\begin{prop}\label{Prop3}
Assume that $f:\R\to\R$ is defined as $f(x)=ax+b$, with $a,b\in\R$ and $a>0$. Let also $\p:X\to\R$, $\s:Y\to\R$ 
be two filtering functions for the triangulable spaces $X$ and $Y$, respectively. 
Then, it holds that
$$
d_{match}\left(\rho_{(X,f\circ\p),k},\rho_{(Y,f\circ\s),k}\right)=a\cdot d_{match}\left(\rho_{(X,\p),k},\rho_{(Y,\s),k}\right).
$$
\end{prop}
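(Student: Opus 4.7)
The plan is to transport the matching distance computation through the bijection on persistence diagrams induced by $f$. By Proposition \ref{Prop2}, the map $F:\bar\Delta^*\to\bar\Delta^*$ defined by $F(u,v)=(f(u),f(v))=(au+b,av+b)$ (with the convention $F(u,\infty)=(f(u),\infty)$ and $F$ fixing $\Delta$ setwise) restricts to a multiset bijection $D_k(X,\varphi)\to D_k(X,f\circ\varphi)$ and $D_k(Y,\psi)\to D_k(Y,f\circ\psi)$: proper cornerpoints go to proper cornerpoints, cornerpoints at infinity to cornerpoints at infinity, with multiplicities preserved, and the diagonal is mapped to itself with infinite multiplicity. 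Consequently, multi-bijections $\gamma$ between $D_k(X,\varphi)$ and $D_k(Y,\psi)$ are in natural bijective correspondence with multi-bijections $\tilde\gamma=F\circ\gamma\circ F^{-1}$ between $D_k(X,f\circ\varphi)$ and $D_k(Y,f\circ\psi)$.

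The heart of the proof is then the pointwise identity
\[
\|F(p)-F(q)\|_{\widetilde\infty}\;=\;a\cdot\|p-q\|_{\widetilde\infty}
\]
for every $p,q\in\bar\Delta^*$. For $p=(u,v)$ and $q=(u',v')$ proper points, this follows because $|f(u)-f(u')|=a|u-u'|$ and $|f(v)-f(v')|=a|v-v'|$, so that $\max\{|f(u)-f(u')|,|f(v)-f(v')|\}=a\max\{|u-u'|,|v-v'|\}$, while also $\frac{f(v)-f(u)}{2}=a\cdot\frac{v-u}{2}$ and similarly for $(u',v')$; the minimum defining $\|\cdot\|_{\widetilde\infty}$ then scales by $a$ since $a>0$. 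The only subtlety is to check the infinity and diagonal conventions: if one or both points are at infinity the expression $\max\{|u-u'|,\infty\}=\infty$ is preserved by the bijection (both sides become $\infty=a\cdot\infty$), and similarly for $\frac{\infty}{2}=\infty$; if $p\in\Delta$ is matched to $q\in\Delta$, both sides vanish. This is the part requiring the most care, but it reduces to a case analysis that fits the conventions listed in Definition \ref{MatchingDistance}.

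From the pointwise identity, for any multi-bijection $\gamma$ we obtain
\[
\max_{p\in D_k(X,f\circ\varphi)}\|p-\tilde\gamma(p)\|_{\widetilde\infty}\;=\;a\cdot\max_{p\in D_k(X,\varphi)}\|p-\gamma(p)\|_{\widetilde\infty}.
\]
Taking the infimum over $\gamma$ on the right-hand side corresponds, via the bijection $\gamma\mapsto\tilde\gamma$, to the infimum over all multi-bijections between $D_k(X,f\circ\varphi)$ and $D_k(Y,f\circ\psi)$ on the left, yielding the claimed equality $d_{match}(\rho_{(X,f\circ\varphi),k},\rho_{(Y,f\circ\psi),k})=a\cdot d_{match}(\rho_{(X,\varphi),k},\rho_{(Y,\psi),k})$.

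The main obstacle I anticipate is the bookkeeping around points at infinity and the diagonal: one must verify that the bijection $\gamma\mapsto\tilde\gamma$ is well-defined on multisets (including the diagonal counted with infinite multiplicity) and that the pseudodistance identity genuinely holds in every limiting case prescribed by the conventions $\infty-y=\infty$, $\infty-\infty=0$, $\frac{\infty}{2}=\infty$. Once these cases are dispatched, the rest of the argument is a direct application of Proposition \ref{Prop2} and a scalar factoring.
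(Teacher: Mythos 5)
Your proposal is correct and follows essentially the same route as the paper's own proof: both rest on the scaling identity $\|(f(u),f(v))-(f(u'),f(v'))\|_{\widetilde{\infty}}=a\cdot\|(u,v)-(u',v')\|_{\widetilde{\infty}}$, the multiplicity-preserving bijection of persistence diagrams from Proposition \ref{Prop2}, and the induced correspondence of multi-bijections in the definition of $d_{match}$. Your extra care with the conventions at infinity and on the diagonal is a welcome refinement of details the paper leaves implicit, but it does not change the argument.
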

\begin{proof}
Assume that $(u,v),(u',v')\in\Delta^*$, with $(u,v)\in D_k(X,\varphi)$ and $(u',v')\in D_k(Y,\psi)$. Then, recalling the assumptions on the function $f$, it follows that $(f(u),f(v)),(f(u'),f(v'))\in\Delta^*$, with the convention about points at infinity that $f(\infty)=\infty$. Moreover, following the definition of the operator $\|\cdot\|_{\widetilde{\infty}}$ 
in Definition \ref{MatchingDistance} we have
{\setlength\arraycolsep{2pt}
\begin{eqnarray}
&&\|(f(u),f(v))\!-\!(f(u'),f(v'))\|_{\widetilde{\infty}}=\nonumber\\
&=&\!\min\!\left\{\max\left\{|f(u)\!-\!f(u')|,|f(v)\!-\!f(v')|\right\},\max\!\left\{\frac{f(v)\!-\!f(u)}{2},\frac{f(v')\!-\!f(u')}{2}\right\}\!\right\}\!=\nonumber\\
&=&\!\min\!\left\{\max\left\{a\cdot|u\!-\!u'|,a\cdot|v\!-\!v'|\right\},\max\!\left\{a\cdot\frac{v\!-\!u}{2},a\cdot\frac{v'\!-\!u'}{2}\right\}\!\right\}=\!\nonumber\\
&=&a\cdot\|(u,v)\!-\!(u',v')\|_{\widetilde{\infty}}.\nonumber
\end{eqnarray}}

Thus the claim follows from the definition of $d_{match}$ (Definition \ref{MatchingDistance}), from Proposition \ref{Prop2} and by observing that the correspondence taking each pair $(u,v)\in\Delta^*$ to the pair $(f(u),f(v))\in\Delta^*$ is actually a bijection.
\end{proof}

\subsection{Main results}
Let us go back to the main goal of this work. In what follows, we suppose that two sets $\Lambda,B\subseteq\R^n$ are given, such that the half-planes collection $\left\{\pi_{\left(\vec\lambda,\vec\beta\right)}:\vec u=s\vec\lambda+\vec\beta,\ \vec v=t\vec\lambda+\vec\beta\ |\  s<t\right\}_{\left(\vec\lambda,\vec\beta\right)\in\Lambda\times B}$ satisfies a property analogous to the one described in Remark \ref{FoliationRem}. More precisely, we are assuming that for every $\left(\vec u, \vec v\right)\in\Delta^+$ there exists one and only one $\left(\vec\lambda,\vec\beta\right)\in\Lambda\times B$ with $\left(\vec u, \vec v\right)\in\pi_{\left(\vec\lambda,\vec\beta\right)}$, and that for every $\left(\vec\lambda,\vec\beta\right)\in\Lambda\times B$, the half-plane $\pi_{\left(\vec\lambda,\vec\beta\right)}$ is a subset of $\Delta^+$. We observe that an infinite number of possibilities are available in order to choose the set $\Lambda\times B$. As a simple example, we could consider any variation of $Adm_n$ obtained by substituting the condition $\sum_{i=1}^n l_i^2=1$ in Definition \ref{FoliationDef} with $\sum_{i=1}^n l_i^p=1$, for any positive natural number $p$. Let also $X,Y$ be two triangulable spaces, and $\vec\varphi:X\to\R^n$, $\vec\psi:Y\to\R^n$ two filtering functions. Setting $F_{\left(\vec\lambda,\vec
\beta\right)}^{\vec\varphi}=\max_{i=1,\dots,n}\left\{\frac{\varphi_i-\beta_i}{\lambda_i}\right\}$ and $F_{\left(\vec\lambda,\vec
\beta\right)}^{\vec\psi}=\max_{i=1,\dots,n}\left\{\frac{\psi_i-\beta_i}{\lambda_i}\right\}$, we want to prove that the distance $\widehat D_{match}$ between multidimensional rank invariants, defined as 
$\widehat D_{match}(\rho_{(X,\vec\varphi),k},\rho_{(Y,\vec\psi),k})=\sup_{\left(\vec\lambda,\vec\beta\right)\in\Lambda\times B}\min_i\lambda_i\cdot d_{match}(\rho_{(X,F_{\left(\vec\lambda,\vec\beta\right)}^{\fr}),k},\rho_{(Y,F_{\left(\vec\lambda,\vec\beta\right)}^{\vec\psi}),k})$, 
coincides with the multidimensional matching distance $D_{match}$ introduced in Definition \ref{MultiDimDist}. 

The following two propositions give insights on the elements of the set $\Lambda$.

\begin{prop}\label{Insight1}
For every $\vec\lambda=(\lambda_1,\dots,\lambda_n)\in\Lambda$, it holds that $\lambda_i>0$, for $i=1,\dots,n$.
\end{prop}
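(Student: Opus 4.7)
The plan is to unfold the hypothesis that $\pi_{(\vec\lambda,\vec\beta)}\subseteq\Delta^+$ against the parametric equations of the half-plane. The proposition is essentially a direct consequence of these two facts, so no auxiliary machinery is needed; the argument is coordinatewise.

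Concretely, I would pick any two reals $s,t$ with $s<t$ and form the point $(\vec u,\vec v)=(s\vec\lambda+\vec\beta,\,t\vec\lambda+\vec\beta)$. By the parametrization of $\pi_{(\vec\lambda,\vec\beta)}$ this point lies in $\pi_{(\vec\lambda,\vec\beta)}$, and by our standing assumption $\pi_{(\vec\lambda,\vec\beta)}\subseteq\Delta^+$. Therefore $\vec u\prec\vec v$, that is, $u_i<v_i$ for every $i=1,\dots,n$. Substituting the coordinates of $\vec u$ and $\vec v$ this reads $s\lambda_i+\beta_i<t\lambda_i+\beta_i$, so $(t-s)\lambda_i>0$, and since $t-s>0$ we conclude $\lambda_i>0$ for every index $i$.

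I would emphasise that the argument uses only one of the two ``foliation'' conditions imposed on $\Lambda\times B$, namely the containment $\pi_{(\vec\lambda,\vec\beta)}\subseteq\Delta^+$; the bijective correspondence between points of $\Delta^+$ and admissible pairs is not required here. There is no real obstacle: the only subtlety is remembering that $\Delta^+$ is defined by the \emph{strict} inequality $\vec u\prec\vec v$, which is exactly what forces the strict positivity of each $\lambda_i$ (an inequality $\lambda_i\ge 0$ would only give $u_i\le v_i$).
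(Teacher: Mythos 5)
Your proof is correct and follows essentially the same route as the paper's: both fix a point $(\vec u,\vec v)=(s\vec\lambda+\vec\beta,\,t\vec\lambda+\vec\beta)$ on the half-plane, use $\pi_{(\vec\lambda,\vec\beta)}\subseteq\Delta^+$ to get $0<v_i-u_i=(t-s)\lambda_i$, and conclude $\lambda_i>0$ from $t-s>0$. Your closing remark that only the containment condition (and not the uniqueness of the covering pair) is needed is accurate and matches what the paper's argument actually uses.
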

\begin{proof}
For every $\vec\lambda\in\Lambda$, with $\vec\lambda=(\lambda_1,\dots,\lambda_n)$, we can arbitrarily choose $\vec\beta=(\beta_1,\dots,\beta_n)\in B$, and fix a point $\left(\vec u,\vec v\right)\in\pi_{\left(\vec\lambda,\vec\beta\right)}$, with $\vec u=(u_1,\dots,u_n)$ and
$\vec v=(v_1,\dots,v_n)$. Then two values $s,t\in\R$ exist, with $s<t$ and such that
$$
\left\{%
\begin{array}{ll}
    \vec u=s\vec\lambda + \vec\beta\\
    \vec v=t\vec\lambda + \vec\beta\\
\end{array}%
\right..
$$
The claim remains proved by observing that, for every index $i=1,\dots,n$, we have 
$0<v_i-u_i=(t-s)\cdot \lambda_i$ and hence, since $s<t$, $\lambda_i=\frac{v_i-u_i}{t-s}>0$.
\end{proof}

\begin{prop}\label{Insight2}
For every $\vec w=(w_1,\dots,w_n)\in\R^n$ with $w_i>0$ for
$i=1,\dots,n$, there exists one and only one $\vec\lambda\in\Lambda$ such that 
$\vec w=a\cdot\vec\lambda$ for a suitable real value $a>0$.
\end{prop}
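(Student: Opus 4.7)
The plan is to prove existence and uniqueness separately, both by exploiting the foliation hypothesis imposed on $\Lambda\times B$.

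For existence, I would first note that $(\vec 0,\vec w)\in\Delta^+$, since $\vec 0\prec\vec w$ by positivity of every $w_i$. The foliation hypothesis then yields a unique admissible pair $(\vec\lambda,\vec\beta)\in\Lambda\times B$ such that $(\vec 0,\vec w)\in\pi_{(\vec\lambda,\vec\beta)}$. Reading off the defining parametric equations of the half-plane at this point, there exist reals $s<t$ with $\vec 0=s\vec\lambda+\vec\beta$ and $\vec w=t\vec\lambda+\vec\beta$; subtracting yields $\vec w=(t-s)\vec\lambda$, so the required scalar is $a=t-s>0$.

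For uniqueness, I would suppose that $\vec\lambda,\vec\lambda'\in\Lambda$ both satisfy $\vec w=a\vec\lambda=a'\vec\lambda'$ with $a,a'>0$ and aim to conclude $\vec\lambda=\vec\lambda'$. The key observation is that any such pair must be positively proportional: setting $c=a'/a>0$ one has $\vec\lambda=c\vec\lambda'$. I would then pick any $\vec\beta\in B$ (nonempty since the foliation is) and any point $(\vec u,\vec v)\in\pi_{(\vec\lambda',\vec\beta)}$, say $\vec u=s'\vec\lambda'+\vec\beta$ and $\vec v=t'\vec\lambda'+\vec\beta$ with $s'<t'$, and rewrite these equations as $\vec u=(s'/c)\vec\lambda+\vec\beta$ and $\vec v=(t'/c)\vec\lambda+\vec\beta$ with $s'/c<t'/c$ (using $c>0$). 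This shows that the same point $(\vec u,\vec v)$ also lies on $\pi_{(\vec\lambda,\vec\beta)}$, and the uniqueness clause of the foliation property then forces $(\vec\lambda,\vec\beta)=(\vec\lambda',\vec\beta)$, i.e.\ $\vec\lambda=\vec\lambda'$.

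The argument is essentially a direct unpacking of the foliation axioms, and I do not expect any real obstacle. The most delicate point in the uniqueness step is the realization that one can use the \emph{same} $\vec\beta\in B$ as the translation part for both candidate half-planes, so that the foliation uniqueness can be invoked with two admissible pairs differing only in their first coordinate.
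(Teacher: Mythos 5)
Your proof is correct and follows essentially the same route as the paper's: existence via applying the foliation hypothesis to the point $(\mathbf{0},\vec w)\in\Delta^+$ and subtracting the two parametric equations, and uniqueness by observing that two positively proportional direction vectors paired with the same $\vec\beta$ parameterize the same half-plane, contradicting the one-and-only-one clause of the foliation property. The only (immaterial) difference is that the paper derives the uniqueness contradiction at the specific point $(\mathbf{0},\vec w)$ with the $\vec\beta$ already produced in the existence step, whereas you use an arbitrary $\vec\beta\in B$ and an arbitrary point of $\pi_{(\vec\lambda',\vec\beta)}$.
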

\begin{proof}
For every $\vec w=(w_1,\dots,w_n)\in\R^n$ with 
$w_i>0$ for $i=1,\dots,n$, the pair $({\mathbf 0},\vec w)$ belongs to $\Delta^+$, ${\mathbf 0}$ representing the null vector of $\R^n$. 
From the assumptions on the collection $\left\{\pi_{\left(\vec\lambda,\vec\beta\right)}:\vec u=s\vec\lambda+\vec\beta,\ \vec v=t\vec\lambda+\vec\beta\ |\  s<t\right\}_{\left(\vec\lambda,\vec\beta\right)\in\Lambda\times B}$, it follows that there exists $\left(\vec\lambda,\vec\beta\right)\in\Lambda\times B$ such that $({\mathbf 0},\vec w)\in\pi_{\left(\vec\lambda,\vec\beta\right)}$, that is, ${\mathbf 0}=s\vec\lambda+\vec\beta,\vec w=t\vec\lambda+\vec\beta$ and $\vec w=(t-s)\vec\lambda$ for a suitable pair $(s,t)$, with $s<t$. Therefore, for every index $i=1,\dots,n$ it holds that $w_i=(t-s)\cdot\lambda_i>0$, thus proving (setting $a=t-s$) the existence of a vector $\vec\lambda\in\Lambda$ satisfying the claim. In order to show that such a $\vec\lambda$ is unique, let us suppose the existence of a vector $\vec\lambda'\in\Lambda$ with $\vec\lambda'\neq\vec\lambda$ and $\vec w=a'\cdot\vec\lambda'$ for a suitable real value $a'>0$. In this case, we would have $\vec w=a\cdot\vec\lambda=a'\cdot\vec\lambda'$ and hence $\vec\lambda=\frac{a'}{a}\cdot\vec\lambda'$. Therefore, this last equality would imply
$$
\left\{%
\begin{array}{ll}
    {\mathbf 0}=s\vec\lambda + \vec\beta=s\cdot\frac{a'}{a}\cdot\vec\lambda'+\vec\beta=s'\vec\lambda'+\vec\beta\\
    \vec w=t\vec\lambda + \vec\beta=t\cdot\frac{a'}{a}\cdot\vec\lambda'+\vec\beta=t'\vec\lambda'+\vec\beta\\
\end{array}%
\right.,
$$
where $s'=s\cdot\frac{a'}{a}$ and $t'=t\cdot\frac{a'}{a}$. In other words, the point $({\mathbf 0},\vec w)\in\Delta^+$ 
would lie on two half-planes associated with two different pairs $\left(\vec\lambda,\vec\beta\right),\left(\vec\lambda',\vec\beta\right)\in\Lambda\times B$.
 This contradiction concludes the proof.
\end{proof}

In what follows, for each considered $\left(\vec \lambda,\vec \beta\right)\in\R^n\times\R^n$, we shall use the symbol $\pi_{\left(\vec\lambda,\vec\beta\right)}$ to denote the half-plane 
$$
\left\{%
\begin{array}{ll}
    \vec u=s\vec\lambda + \vec\beta\\
    \vec v=t\vec\lambda + \vec\beta\\
\end{array}%
\right.,
$$
with $s<t$, and the symbols $F_{\left(\vec\lambda,\vec\beta\right)}^{\vec\varphi}$, $F_{\left(\vec\lambda,\vec\beta\right)}^{\vec\psi}$ in the sense of the Reduction Theorem \ref{reduction}, that is, $F_{\left(\vec\lambda,\vec\beta\right)}^{\vec\varphi}=\max_{i=1,\dots,n}\left\{\frac{\varphi_i-\beta_i}{\lambda_i}\right\}$ and $F_{\left(\vec\lambda,\vec\beta\right)}^{\vec\psi}=\max_{i=1,\dots,n}\left\{\frac{\psi_i-\beta_i}{\lambda_i}\right\}$.
Moreover, we shall write $d_{\left(\vec\lambda,\vec\beta\right)}(\rho_{(X,\vec\varphi),k},\rho_{(Y,\vec\psi),k})$ to denote the value $\min_{i=1,\dots,n}\lambda_i\cdot d_{match}(\rho_{(X,F_{\left(\vec\lambda,\vec\beta\right)}^{\vec\varphi}),k},\rho_{(Y,F_{\left(\vec\lambda,\vec\beta\right)}^{\vec\psi}),k})$. 
Finally, for every $\vec w=(w_1,\dots,w_n)\in\R^n$, the Euclidean norm $\sqrt{w_1^2+\dots+w_n^2}$ will be denoted by the symbol $\left\|\vec w\right\|$.

The next result allows us to assume that each vector $\vec\lambda=(\lambda_1,\dots,\lambda_n)\in\Lambda$ is a unit vector (with respect to the Euclidean norm). 

\begin{prop}\label{Insight3}
Let $\Lambda^*$ be the set containing all and only the vectors $\vec\lambda^*=(\lambda_1^*,\dots,\lambda_n^*)\in\R^n$ with $\lambda_i^*>0$ for every $i=1,\dots,n$, and $\left\|\vec\lambda^*\right\|=1$. Then it holds that $\underset{\left(\vec\lambda,\vec\beta\right)\in\Lambda\times B}{\sup} d_{\left(\vec\lambda,\vec\beta\right)}(\rho_{(X,\vec\varphi),k},\rho_{(Y,\vec\psi),k})=\underset{\left(\vec\lambda^*,\vec\beta\right)\in\Lambda^*\times B}{\sup} d_{\left(\vec\lambda^*,\vec\beta\right)}(\rho_{(X,\vec\varphi),k},\rho_{(Y,\vec\psi),k})$.
\end{prop}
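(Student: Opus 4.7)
The plan is to set up a bijection between $\Lambda\times B$ and $\Lambda^*\times B$ that preserves the quantity $d_{\left(\vec\lambda,\vec\beta\right)}$ pointwise. Once that is done, the equality of suprema is immediate.

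First I would use Propositions \ref{Insight1} and \ref{Insight2} to establish that the map $\Phi:\Lambda\to\Lambda^*$ defined by $\vec\lambda\mapsto \vec\lambda/\|\vec\lambda\|$ is well defined and bijective. Well-definedness is immediate from Proposition \ref{Insight1} (the image has positive components and unit Euclidean norm). For surjectivity, given any $\vec\lambda^*\in\Lambda^*$, Proposition \ref{Insight2} supplies a unique $\vec\lambda\in\Lambda$ with $\vec\lambda^*=a\vec\lambda$ for some $a>0$; since $\|\vec\lambda^*\|=1$ we get $a=1/\|\vec\lambda\|$, so $\Phi(\vec\lambda)=\vec\lambda^*$. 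Injectivity also follows from the uniqueness clause of Proposition \ref{Insight2} applied with $\vec w=\vec\lambda$ (two elements of $\Lambda$ that are positively parallel must coincide). Taking $\Phi\times\mathrm{id}_B$ yields a bijection $\Lambda\times B\to\Lambda^*\times B$.

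Next I would relate the one-dimensional filtering functions produced by the Reduction Theorem under $\Phi$. For $\vec\lambda\in\Lambda$ and $\vec\lambda^*=\vec\lambda/\|\vec\lambda\|$, the identity $1/\lambda_i^*=\|\vec\lambda\|/\lambda_i$ gives
$$
F_{\left(\vec\lambda^*,\vec\beta\right)}^{\vec\varphi}=\max_{i=1,\dots,n}\left\{\frac{\varphi_i-\beta_i}{\lambda_i^*}\right\}=\|\vec\lambda\|\cdot\max_{i=1,\dots,n}\left\{\frac{\varphi_i-\beta_i}{\lambda_i}\right\}=f\circ F_{\left(\vec\lambda,\vec\beta\right)}^{\vec\varphi},
$$
where $f(y)=\|\vec\lambda\|\cdot y$ is strictly increasing affine, and analogously for $\vec\psi$. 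Applying Proposition \ref{Prop3} with $a=\|\vec\lambda\|$, $b=0$ gives
$$
d_{match}\bigl(\rho_{(X,F_{\left(\vec\lambda^*,\vec\beta\right)}^{\vec\varphi}),k},\rho_{(Y,F_{\left(\vec\lambda^*,\vec\beta\right)}^{\vec\psi}),k}\bigr)=\|\vec\lambda\|\cdot d_{match}\bigl(\rho_{(X,F_{\left(\vec\lambda,\vec\beta\right)}^{\vec\varphi}),k},\rho_{(Y,F_{\left(\vec\lambda,\vec\beta\right)}^{\vec\psi}),k}\bigr).
$$

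Then I would combine this with $\min_i\lambda_i^*=(\min_i\lambda_i)/\|\vec\lambda\|$ so that the two factors of $\|\vec\lambda\|$ cancel exactly:
$$
d_{\left(\vec\lambda^*,\vec\beta\right)}(\rho_{(X,\vec\varphi),k},\rho_{(Y,\vec\psi),k})=\frac{\min_i\lambda_i}{\|\vec\lambda\|}\cdot\|\vec\lambda\|\cdot d_{match}(\cdots)=d_{\left(\vec\lambda,\vec\beta\right)}(\rho_{(X,\vec\varphi),k},\rho_{(Y,\vec\psi),k}).
$$
Since the bijection $(\vec\lambda,\vec\beta)\mapsto(\vec\lambda^*,\vec\beta)$ carries the set $\Lambda\times B$ onto $\Lambda^*\times B$ and preserves the value of $d_{(\cdot,\cdot)}$, the two suprema coincide, yielding the claim.

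The substantive step is really the scaling identity $F_{\left(\vec\lambda^*,\vec\beta\right)}^{\vec\varphi}=\|\vec\lambda\|\cdot F_{\left(\vec\lambda,\vec\beta\right)}^{\vec\varphi}$ paired with the cancellation of $\|\vec\lambda\|$ between Proposition \ref{Prop3} and $\min_i\lambda_i^*$; no real obstacle arises, but one must be careful that the bijection in Step~1 is genuinely bijective, which is where the previous two propositions are used non-trivially.
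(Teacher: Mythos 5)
Your proposal is correct and follows essentially the same route as the paper's own proof: the bijection $(\vec\lambda,\vec\beta)\mapsto(\vec\lambda/\|\vec\lambda\|,\vec\beta)$ justified by Propositions \ref{Insight1} and \ref{Insight2}, the scaling identity $F^{\vec\varphi}_{(\vec\lambda^*,\vec\beta)}=\|\vec\lambda\|\,F^{\vec\varphi}_{(\vec\lambda,\vec\beta)}$, Proposition \ref{Prop3} with $a=\|\vec\lambda\|$, and the exact cancellation against $\min_i\lambda_i^*=\min_i\lambda_i/\|\vec\lambda\|$. Your added detail on injectivity and surjectivity of the normalization map is a slight elaboration of what the paper leaves implicit, but the argument is the same.
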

\begin{proof}
From Proposition \ref{Insight1} and Proposition \ref{Insight2} it follows that a bijection between $\Lambda\times B$ and $\Lambda^*\times B$ exists, taking each $\left(\vec\lambda,\vec\beta\right)\in\Lambda\times B$ to $\left(\frac{\vec\lambda}{\left\|\vec\lambda\right\|},\vec\beta\right)\in\Lambda^*\times B$. 
Let us now fix $\vec\lambda\in\Lambda$, and set $\vec\lambda^*=\frac{\vec\lambda}{\left\|\vec\lambda\right\|}$.
We observe that the equalities $F^{\vec\varphi}_{\left(\vec\lambda^*,\vec\beta\right)}(x)=\left\|\vec\lambda\right\|F^{\vec\varphi}_{\left(\vec\lambda,\vec\beta\right)}(x)$ and $F^{\vec\psi}_{\left(\vec\lambda^*,\vec\beta\right)}(y)=\left\|\vec\lambda\right\|F^{\vec\psi}_{\left(\vec\lambda,\vec\beta\right)}(y)$ hold for every $x\in X$ and for every $y\in Y$, respectively. Hence, by Proposition \ref{Prop3} we have  $d_{match}(\rho_{(X,F^{\vec\varphi}_{\left(\vec\lambda^*,\vec\beta\right)}),k},\rho_{(Y,F^{\vec\psi}_{\left(\vec\lambda^*,\vec\beta\right)}),k})=\left\|\vec\lambda\right\|d_{match}(\rho_{(X,F^{\vec\varphi}_{\left(\vec\lambda,\vec\beta\right)}),k},\rho_{(Y,F^{\vec\psi}_{\left(\vec\lambda,\vec\beta\right)}),k})$, leading to  
{\setlength\arraycolsep{2pt}
\begin{eqnarray*}
&\ &d_{\left(\vec\lambda^*,\vec\beta\right)}(\rho_{(X,\vec\varphi),k},\rho_{(Y,\vec\psi),k})=\min_{i=1,\dots,n}\lambda^*_i\cdot d_{match}(\rho_{(X,F^{\vec\varphi}_{\left(\vec\lambda^*,\vec\beta\right)}),k},\rho_{(Y,F^{\vec\psi}_{\left(\vec\lambda^*,\vec\beta\right)}),k})=\nonumber\\
&=&\frac{\min_i \lambda_i}{\left\|\vec\lambda\right\|}\cdot\left\|\vec\lambda\right\|d_{match}(\rho_{(X,F^{\vec\varphi}_{\left(\vec\lambda,\vec\beta\right)}),k},\rho_{(Y,F^{\vec\psi}_{\left(\vec\lambda,\vec\beta\right)}),k})=d_{\left(\vec
\lambda,\vec\beta\right)}(\rho_{(X,\vec\p),k},\rho_{(Y,\vec\psi),k}),\nonumber\end{eqnarray*}}
and thus implying our claim.
\end{proof}

Roughly speaking, Proposition \ref{Insight3} implies that the bijection taking each pair $\left(\vec\lambda,\vec\beta\right)\in\Lambda\times B$ to $\left(\frac{\vec\lambda}{\|\vec\lambda\|},\vec\beta\right)$ deforms the set $\Lambda\times B$ into the product given by $\Lambda^*\times B$, in a way that does not affect the computation of the matching distance between multidimensional rank invariants. Therefore, in order to prove our main result, it is sufficient to define a correspondence between $\Lambda^*\times B$ and $Adm_n$, preserving the matching distance between multidimensional rank invariants. 
  
\begin{prop}\label{Insight4}
Let $\Lambda^*$ be the set containing all and only the vectors $\vec\lambda^*=(\lambda_1^*,\dots,\lambda_n^*)\in\R^n$ with $\lambda_i^*>0$ for every $i=1,\dots,n$, and $\left\|\vec\lambda^*\right\|=1$. Then it holds that $\underset{\left(\vec\lambda^*,\vec\beta\right)\in\Lambda^*\times B}{\sup} d_{\left(\vec\lambda^*,\vec\beta\right)}(\rho_{(X,\vec\varphi),k},\rho_{(Y,\vec\psi),k})=\underset{\left(\vec l,\vec b\right)\in Adm_n}{\sup} d_{\left(\vec l,\vec b\right)}(\rho_{(X,\vec\varphi),k},\rho_{(Y,\vec\psi),k})$.\end{prop}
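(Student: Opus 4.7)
The plan is to exhibit a surjection $\Phi\colon\Lambda^*\times B\to Adm_n$ which preserves the quantity $d_{(\cdot,\cdot)}(\rho_{(X,\vec\varphi),k},\rho_{(Y,\vec\psi),k})$; this immediately forces the two suprema to coincide.

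I would set $\Phi(\vec\lambda^*,\vec\beta)=(\vec l,\vec b)$ with $\vec l=\vec\lambda^*$ and $\vec b=\vec\beta-c\vec\lambda^*$, where $c=\frac{\sum_j\beta_j}{\sum_j\lambda_j^*}$ (the denominator is positive by the very definition of $\Lambda^*$). A direct check gives $(\vec l,\vec b)\in Adm_n$: the vector $\vec l$ has positive components and unit Euclidean norm by definition of $\Lambda^*$, while $\sum_i b_i=\sum_i\beta_i-c\sum_i\lambda_i^*=0$. Geometrically, $\vec b$ and $\vec\beta$ differ by a scalar multiple of the common direction vector $\vec l=\vec\lambda^*$, so $\pi_{(\vec l,\vec b)}=\pi_{(\vec\lambda^*,\vec\beta)}$ as half-planes.

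Next I would verify that $\Phi$ preserves the quantity being maximized. A short computation yields $F^{\vec\varphi}_{(\vec l,\vec b)}=F^{\vec\varphi}_{(\vec\lambda^*,\vec\beta)}+c$, and the analogous identity for $\vec\psi$; applying Proposition \ref{Prop3} with the affine map $f(x)=x+c$ (so $a=1$) then shows that the reduced $1$-dimensional matching distances agree. Since $\min_i l_i=\min_i\lambda_i^*$, it follows that $d_{(\vec\lambda^*,\vec\beta)}(\rho_{(X,\vec\varphi),k},\rho_{(Y,\vec\psi),k})=d_{(\vec l,\vec b)}(\rho_{(X,\vec\varphi),k},\rho_{(Y,\vec\psi),k})$.

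The delicate step, which I expect to be the main obstacle, is the surjectivity of $\Phi$. Given $(\vec l,\vec b)\in Adm_n$, the half-plane $\pi_{(\vec l,\vec b)}$ lies in $\Delta^+$ by Remark \ref{FoliationRem}; I would choose any point $p=(\vec u,\vec v)\in\pi_{(\vec l,\vec b)}$ and invoke the foliation hypothesis on $\Lambda\times B$, combined with the bijection between $\Lambda$ and $\Lambda^*$ implicit in Proposition \ref{Insight3}, to extract the unique $(\vec\lambda^*,\vec\beta)\in\Lambda^*\times B$ with $p\in\pi_{(\vec\lambda^*,\vec\beta)}$. Because $\vec v-\vec u$ is a positive scalar multiple of both $\vec l$ and $\vec\lambda^*$, and both vectors are unit with positive entries, we obtain $\vec\lambda^*=\vec l$. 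Then $\pi_{(\vec l,\vec b)}$ and $\pi_{(\vec l,\vec\beta)}$ are two half-planes with the same direction sharing the point $p$, and a brief verification shows that any two such half-planes must coincide; in particular $\vec\beta-\vec b$ is a scalar multiple of $\vec l$. Writing $\vec\beta=\vec b+k\vec l$ and using $\sum_i b_i=0$ pins down $k=\frac{\sum_j\beta_j}{\sum_j l_j}$, which equals the constant $c$ in the definition of $\Phi$. Hence $\Phi(\vec\lambda^*,\vec\beta)=(\vec l,\vec b)$, so $\Phi$ is surjective, and combined with the preservation of $d_{(\cdot,\cdot)}$ this yields the desired equality of suprema.
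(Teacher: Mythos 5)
Your proposal is correct and follows essentially the same route as the paper: the same normalizing map $(\vec\lambda^*,\vec\beta)\mapsto\bigl(\vec\lambda^*,\vec\beta-\tfrac{\sum_j\beta_j}{\sum_j\lambda_j^*}\vec\lambda^*\bigr)$, the same use of Proposition \ref{Prop3} with the translation $f(x)=x+c$, and the same surjectivity argument via a point on the given half-plane and the foliation property. The only cosmetic difference is that you deduce $\vec\lambda^*=\vec l$ and the value of $k$ directly, whereas the paper reparameterizes $\pi_{(\vec\lambda^*,\vec\beta)}$ into $Adm_n$ form and invokes the uniqueness in Remark \ref{FoliationRem}; these are the same computation.
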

\begin{proof}
Let us start by defining $f:\Lambda^*\times B\to Adm_n$ as the function taking each $\left(\vec\lambda^*,\vec\beta\right)\in\Lambda^*\times B$, with $\vec\lambda^*=\left(\lambda^*_1,\dots\lambda^*_n\right)$ and $\vec\beta=\left(\beta_1,\dots,\beta_n\right)$, to the pair $\left(\vec\lambda^*,\vec\beta-\frac{\sum_{i=1}^{n}\beta_i}{\sum_{i=1}^{n}\lambda^*_i}\vec\lambda^*\right)$. Such a function is well defined since  $\lambda_i^*>0$ for every index $i=1,\dots,n$. Moreover, $\im f\subseteq Adm_n$ since $\left\|\vec\lambda^*\right\|=1$ and $\sum_{j=1}^n\left(\beta_j-\frac{\sum_{i=1}^{n}\beta_i}{\sum_{i=1}^{n}\lambda^*_i}\lambda^*_j\right)=0$. 

We now prove that $f$ is actually surjective, that is, $\im f=Adm_n$. To this aim, let us fix $\left(\vec l,\vec b\right)\in Adm_n$ and consider a point $\left(\vec u,\vec v\right)\in\Delta^+$ such that $\left(\vec u,\vec v\right)\in\pi_{\left(\vec l,\vec b\right)}$. It is trivial to check that the half-planes collection $\{\pi_{\left(\vec\lambda^*,\vec\beta\right)}:\vec u=s\vec\lambda^*+\vec\beta,\ \vec v=t\vec\lambda^*+\vec\beta\ |\  s<t\}_{\left(\vec\lambda^*,\vec\beta\right)\in\Lambda^*\times B}$ satisfies a property analogous to the one described in Remark \ref{FoliationRem}, and hence there exist two real values $\hat s,\hat t$, with $\hat s<\hat t$, and a pair $\left(\vec\lambda^*,\vec\beta\right)\in\Lambda^*\times B$ such that 
\begin{eqnarray}\label{Eq2}
\left\{%
\begin{array}{ll}
    \vec u=\hat s\vec\lambda^* + \vec\beta\\
    \vec v=\hat t\vec\lambda^* + \vec\beta\\
\end{array}%
\right..
\end{eqnarray} 
By considering the change of coordinates given by $\sigma=s+\frac{\sum_{i=1}^n\beta_i}{\sum_{i=1}^n\lambda_i^*},\tau=t+\frac{\sum_{i=1}^n\beta_i}{\sum_{i=1}^n\lambda_i^*}$ (such a translation is well defined since  $\lambda_i^*>0$ for every index $i=1,\dots,n$), in equations (\ref{Eq2}) we obtain
\begin{eqnarray}\label{Eq3}
\left\{%
\begin{array}{ll}
    \vec u=\hat s\vec\lambda^* +\vec\beta=\left(\hat \sigma-\frac{\sum_{i=1}^n\beta_i}{\sum_{i=1}^n\lambda_i^*}\right)\vec\lambda^*+\vec\beta=\hat\sigma\vec\lambda^*+\vec b^*\\
    \vec v=\hat t\vec\lambda^* +\vec\beta=\left(\hat 
  \tau-\frac{\sum_{i=1}^n\beta_i}{\sum_{i=1}^n\lambda_i^*}\right)\vec\lambda^*+\vec\beta=\hat\tau\vec\lambda^*+\vec b^*\\
\end{array}%
\right.,
\end{eqnarray} 
where $\vec b^*=(b^*_1,\dots,b^*_n)=\left(\beta_1-\frac{\sum_{i=1}^n\beta_i}{\sum_{i=1}^n\lambda_i^*}\lambda_1^*,\dots,\beta_n-\frac{\sum_{i=1}^n\beta_i}{\sum_{i=1}^n\lambda_i^*}\lambda_n^*\right)$ and $\sum_{j=1}^{n} b_j^*=\sum_{j=1}^{n}\left(\beta_j-\frac{\sum_{i=1}^n\beta_i}{\sum_{i=1}^n\lambda_i^*}\lambda_j^*\right)=0$.
Therefore, $\left(\vec\lambda^*,\vec b^*\right)\in Adm_n$, thus implying (once more by Remark \ref{FoliationRem}) that $\left(\vec\lambda^*,\vec b^*\right)=\left(\vec l,\vec b\right)$. Given that  $f\left(\left(\vec\lambda^*,\vec\beta\right)\right)=\left(\vec\lambda^*,\vec\beta\right.-\left.\frac{\sum_{i=1}^n\beta_i}{\sum_{i=1}^n\lambda_i^*}\vec\lambda^*\right)=\left(\vec\lambda^*,\vec b^*\right)=\left(\vec l,\vec b\right)$, $f$ is surjective.

To conclude the proof, we observe that, from $\left(\vec l,\vec b\right)=f\left(\left(\vec\lambda^*,\vec\beta\right)\right)$, it follows that $F^{\vec\varphi}_{\left(\vec l,\vec b\right)}(x)=F^{\vec\varphi}_{\left(\vec\lambda^*,\vec\beta\right)}(x)+\frac{\sum_{i=1}^n\beta_i}{\sum_{i=1}^n\lambda_i}$ and $F^{\vec\psi}_{\left(\vec l,\vec b\right)}(y)=F^{\vec\psi}_{\left(\vec\lambda^*,\vec\beta\right)}(y)+\frac{\sum_{i=1}^n\beta_i}{\sum_{i=1}^n\lambda_i}$ for every $x\in X$ and for every $y\in Y$, respectively. Therefore, the claim easily follows from Proposition \ref{Prop3}, from the definition of the matching distance between $1$-dimensional rank invariants and from the surjectivity of $f$.
\end{proof}

We can now state our main result, claiming that any other distance between multidimensional rank invariants we can obtain by considering a set $\Lambda\times B$ different from $Adm_n$, coincides with the multidimensional matching distance $D_{match}$ introduced in Definition \ref{MultiDimDist}. It can be easily derived from Proposition \ref{Insight3} and Proposition \ref{Insight4}.

\begin{theorem}\label{InsightTotal}
Let $X$, $Y$ be two triangulable spaces, and $\vec\varphi:X\to\R^n$, $\vec\psi:Y\to\R^n$ two filtering functions. Consider also a set $\Lambda\times B\in\R^n\times\R^n$ and the half-planes collection $\left\{\pi_{\left(\vec\lambda,\vec\beta\right)}:\vec u=s\vec\lambda+\vec\beta,\ \vec v=t\vec\lambda+\vec\beta\ |\  s<t\right\}_{\left(\vec\lambda,\vec\beta\right)\in\Lambda\times B}$, assuming that the following consitions hold:
\begin{itemize}
\item for every $\left(\vec u, \vec v\right)\in\Delta^+$, there exists one and only one $\left(\vec\lambda,\vec\beta\right)\in\Lambda\times B$ such that $\left(\vec u, \vec v\right)\in\pi_{\left(\vec\lambda,\vec\beta\right)}$;
\item for every $\left(\vec\lambda,\vec\beta\right)\in\Lambda\times B$, the half-plane $\pi_{\left(\vec\lambda,\vec\beta\right)}$ is a subset of $\Delta^+$.
\end{itemize}
Then, setting $F_{\left(\vec\lambda,\vec\beta\right)}^{\vec\varphi}=\max_{i=1,\dots,n}\left\{\frac{\varphi_i-\beta_i}{\lambda_i}\right\}$ and $F_{\left(\vec\lambda,\vec
\beta\right)}^{\vec\psi}=\max_{i=1,\dots,n}\left\{\frac{\psi_i-\beta_i}{\lambda_i}\right\}$, it holds that the distance $\widehat D_{match}$ between multidimensional rank invariants, defined as 
$\widehat D_{match}(\rho_{(X,\vec\varphi),k},\rho_{(Y,\vec\psi),k})=\sup_{\left(\vec\lambda,\vec\beta\right)\in\Lambda\times B}\min_i\lambda_i\cdot d_{match}(\rho_{(X,F_{\left(\vec\lambda,\vec\beta\right)}^{\fr}),k},\rho_{(Y,F_{\left(\vec\lambda,\vec\beta\right)}^{\vec\psi}),k})$,
coincides with the multidimensional matching distance $D_{match}$.
\end{theorem}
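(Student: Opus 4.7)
The plan is to chain together Propositions \ref{Insight3} and \ref{Insight4}, which together have already done the substantive work; what remains is merely to assemble the resulting identifications. I would begin by unfolding the definition of $\widehat D_{match}(\rho_{(X,\vec\varphi),k},\rho_{(Y,\vec\psi),k})$ as $\sup_{(\vec\lambda,\vec\beta)\in\Lambda\times B} d_{(\vec\lambda,\vec\beta)}(\rho_{(X,\vec\varphi),k},\rho_{(Y,\vec\psi),k})$, using the notation $d_{(\vec\lambda,\vec\beta)}$ introduced before Proposition \ref{Insight3}. The right-hand side then appears in the exact form to which Proposition \ref{Insight3} applies.

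The hypotheses on $\Lambda\times B$ assumed in the statement of the theorem are precisely the foliation conditions required by Propositions \ref{Insight1} and \ref{Insight2}, and hence by Proposition \ref{Insight3}. A single invocation of the latter therefore converts the sup over $\Lambda\times B$ into the sup of the same quantity over $\Lambda^*\times B$, where $\Lambda^*$ denotes the set of positive unit vectors of $\R^n$. Applying Proposition \ref{Insight4} next identifies this with $\sup_{(\vec l,\vec b)\in Adm_n} d_{(\vec l,\vec b)}(\rho_{(X,\vec\varphi),k},\rho_{(Y,\vec\psi),k})$, which by Definition \ref{MultiDimDist} is exactly $D_{match}(\rho_{(X,\vec\varphi),k},\rho_{(Y,\vec\psi),k})$. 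Chaining the three equalities yields $\widehat D_{match}=D_{match}$.

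The only genuine obstacle, already resolved in the previous propositions, was the interplay between the rescaling $\vec\lambda \mapsto \vec\lambda/\left\|\vec\lambda\right\|$ (which normalizes to unit Euclidean length) and the affine translation $\vec\beta \mapsto \vec\beta - \frac{\sum_{i=1}^n\beta_i}{\sum_{i=1}^n\lambda^*_i}\vec\lambda^*$ (which zeroes out the sum of the second components and so lands in $Adm_n$). Both manipulations alter the $F$-functions only by a positive scalar factor or an additive constant respectively, so Proposition \ref{Prop3} ensures the $1$-dimensional matching distances transform in exactly the way needed to be compensated by the $\min_i\lambda_i$ weight. Since all of this has already been executed inside the proofs of Propositions \ref{Insight3} and \ref{Insight4}, the proof of Theorem \ref{InsightTotal} reduces to the short assembly step outlined above, with essentially no further computation required.
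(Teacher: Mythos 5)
Your proof is correct and follows exactly the route the paper itself takes: the paper states that Theorem \ref{InsightTotal} ``can be easily derived from Proposition \ref{Insight3} and Proposition \ref{Insight4},'' and your assembly of the two suprema-identifications, together with the observation that the hypotheses on $\Lambda\times B$ are precisely those needed for the preceding propositions, is that derivation made explicit. No gaps; nothing further is required.
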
  

\section{Conclusion}
In this paper, we have shown that the foliation method allows us to define an infinite number of distances between multidimensional rank invariants. Indeed, by considering the half-planes collection given by $$\left\{\pi_{\left(\vec\lambda,\vec\beta\right)}:\vec u=s\vec\lambda+\vec\beta,\ \vec v=t\vec\lambda+\vec\beta\ |\  s<t\right\}_{\left(\vec\lambda,\vec\beta\right)\in\Lambda\times B},$$ with  $\Lambda\times B$ satisfying suitable hypotheses, we can introduce the distance $$\widehat D_{match}(\rho_{(X,\vec\varphi),k},\rho_{(Y,\vec\psi),k})=\sup_{\left(\vec\lambda,\vec\beta\right)\in\Lambda\times B}\min_i\lambda_i\cdot d_{match}(\rho_{(X,F_{\left(\vec\lambda,\vec\beta\right)}^{\fr}),k},\rho_{(Y,F_{\left(\vec\lambda,\vec\beta\right)}^{\vec\psi}),k})$$ between the rank invariants $\rho_{(X,\vec\varphi),k}$ and $\rho_{(Y,\vec\psi),k}$. The main result of the present work is the proof that all these distances coincide.
As an immediate consequence, our result opens the way to new procedures for the computation of the matching distance between multidimensional rank invariants. 

\bibliographystyle{abbrv}
\bibliography{CerriFrosiniRefs}

\begin{thebibliography}{10}

\bibitem{BiCeFrGi10}
S.~Biasotti, A.~Cerri, P.~Frosini, and D.~Giorgi.
\newblock Approximating the 2-dimensional matching distance.
\newblock Technical Report 02/10, CNR--IMATI, Genova, 2010.
\newblock available on-line at http://amsacta.cib.unibo.it/2767/.

\bibitem{BiCeFrGiLa08}
S.~Biasotti, A.~Cerri, P.~Frosini, D.~Giorgi, and C.~Landi.
\newblock Multidimensional size functions for shape comparison.
\newblock {\em J. Math. Imaging Vis.}, 32(2):161--179, 2008.

\bibitem{BiDeFaFrGiLaPaSp08}
S.~Biasotti, L.~De~Floriani, B.~Falcidieno, P.~Frosini, D.~Giorgi, C.~Landi,
  L.~Papaleo, and M.~Spagnuolo.
\newblock Describing shapes by geometrical-topological properties of real
  functions.
\newblock {\em ACM Comput. Surv.}, 40(4):1--87, 2008.

\bibitem{BiGiSpFa08}
S.~Biasotti, D.~Giorgi, M.~Spagnuolo, and B.~Falcidieno.
\newblock Size functions for comparing 3d models.
\newblock {\em Pattern Recogn.}, 41(9):2855--2873, 2008.

\bibitem{BuKi07}
P.~Bubenik and P.~Kim.
\newblock A statistical approach to persistent homology.
\newblock {\em HHA}, 9(2):337--362, 2007.

\bibitem{CaDiFe}
F.~Cagliari, B.~{Di Fabio}, and M.~Ferri.
\newblock One-dimensional reduction of multidimensional persistent homology.
\newblock {\em Proc. Amer. Math. Soc.}, in press.

\bibitem{Ca09}
G.~Carlsson.
\newblock Topology and data.
\newblock {\em Bull. Amer. Math. Soc.}, 46(2):255--308, 2009.

\bibitem{CaZo09}
G.~Carlsson and A.~Zomorodian.
\newblock The theory of multidimensional persistence.
\newblock {\em Discrete Comput. Geom.}, 42(1):71--93, 2009.

\bibitem{CaZoCoGu05}
G.~Carlsson, A.~Zomorodian, A.~Collins, and L.~J. Guibas.
\newblock Persistence barcodes for shapes.
\newblock {\em IJSM}, 11(2):149--187, 2005.

\bibitem{CeDiFeFrLa09}
A.~Cerri, B.~{Di Fabio}, M.~Ferri, P.~Frosini, and C.~Landi.
\newblock Multidimensional persistent homology is stable.
\newblock Technical Report 2603, Universit\`a di Bologna, 2009.
\newblock available at http://amsacta.cib.unibo.it/2603/.

\bibitem{CeFeGi06}
A.~Cerri, M.~Ferri, and D.~Giorgi.
\newblock Retrieval of trademark images by means of size functions.
\newblock {\em Graph. Models}, 68(5):451--471, 2006.

\bibitem{CeFr10}
A.~Cerri and P.~Frosini.
\newblock Advances in multidimensional {Size Theory}.
\newblock {\em Image Anal Stereol}, 29(1):19--26, 2010.

\bibitem{ChCoGlGuOu09}
F.~Chazal, D.~Cohen-Steiner, M.~Glisse, L.~J. Guibas, and S.~Y. Oudot.
\newblock Proximity of persistence modules and their diagrams.
\newblock In {\em Proceedings of the 25th annual symposium on Computational
  geometry}, pages 237--246, New York, NY, USA, 2009. ACM.

\bibitem{ChCoGuMeOu09}
F.~Chazal, D.~Cohen-Steiner, L.~J. Guibas, F.~M\'emoli, and S.~Y. Oudot.
\newblock Gromov-{H}ausdorff stable signatures for shapes using persistence.
\newblock {\em Computer Graphics Forum (proc. SGP 2009)}, pages 1393--1403,
  2009.

\bibitem{CoEdHa05}
D.~Cohen-Steiner, H.~Edelsbrunner, and J.~Harer.
\newblock Stability of persistence diagrams.
\newblock In J.~S.~B. Mitchell and G.~Rote, editors, {\em Symposium on
  Computational Geometry}, pages 263--271. ACM, 2005.

\bibitem{CoEdHa07}
D.~Cohen-Steiner, H.~Edelsbrunner, and J.~Harer.
\newblock Stability of persistence diagrams.
\newblock {\em Discrete Comput. Geom.}, 37(1):103--120, 2007.

\bibitem{dAFrLabis}
M.~d'Amico, P.~Frosini, and C.~Landi.
\newblock Using matching distance in size theory: A survey.
\newblock {\em Int. J. Imag. Syst. Tech.}, 16(5):154--161, 2006.

\bibitem{dAFrLa}
M.~d'Amico, P.~Frosini, and C.~Landi.
\newblock Natural pseudo-distance and optimal matching between reduced size
  functions.
\newblock {\em Acta. Appl. Math.}, 109(2):527--554, 2010.

\bibitem{DeGh07}
V.~de~Silva and R.~Ghrist.
\newblock Coverage in sensor networks via persistent homology.
\newblock {\em Algebr. Geom. Topol.}, 7:339--358, 2007.

\bibitem{DiFrPa04}
F.~Dibos, P.~Frosini, and D.~Pasquignon.
\newblock The use of size functions for comparison of shapes through
  differential invariants.
\newblock {\em J. Math. Imaging Vis.}, 21(2):107--118, 2004.

\bibitem{EdHa08}
H.~Edelsbrunner and J.~Harer.
\newblock Persistent homology---a survey.
\newblock In {\em Surveys on discrete and computational geometry}, volume 453
  of {\em Contemp. Math.}, pages 257--282. Amer. Math. Soc., Providence, RI,
  2008.

\bibitem{EdLeZo02}
H.~Edelsbrunner, D.~Letscher, and A.~Zomorodian.
\newblock Topological persistence and simplification.
\newblock {\em Discrete Comput. Geom.}, 28(4):511--533, 2002.

\bibitem{EiSt}
S.~Eilenberg and N.~Steenrod.
\newblock {\em Foundations of algebraic topology}.
\newblock Princeton University Press, Princeton, New Jersey, 1952.

\bibitem{Fr91}
P.~Frosini.
\newblock Measuring shapes by size functions.
\newblock In D.~P. Casasent, editor, {\em Intelligent Robots and Computer
  Vision X: Algorithms and Techniques}, volume 1607, pages 122--133, 1991.

\bibitem{FrLa99}
P.~Frosini and C.~Landi.
\newblock Size theory as a topological tool for computer vision.
\newblock {\em Pattern Recogn. and Image Anal.}, 9:596--603, 1999.

\bibitem{FrLa01}
P.~Frosini and C.~Landi.
\newblock Size functions and formal series.
\newblock {\em Appl. Algebra Engrg. Comm. Comput.}, 12(4):327--349, 2001.

\bibitem{Gh08}
R.~Ghrist.
\newblock Barcodes: the persistent topology of data.
\newblock {\em Bull. Amer. Math. Soc. (N.S.)}, 45(1):61--75 (electronic), 2008.

\bibitem{KaMiMr04}
T.~Kaczynski, K.~Mischaikow, and M.~Mrozek.
\newblock {\em Computational Homology}.
\newblock Number 157 in Applied Mathematical Sciences. Springer-Verlag, 1
  edition, 2004.

\bibitem{LaFr97}
C.~Landi and P.~Frosini.
\newblock New pseudodistances for the size function space.
\newblock In R.~A. Melter, A.~Y. Wu, and L.~J. Latecki, editors, {\em
  Proceedings SPIE, Vision Geometry VI}, volume 3168, pages 52--60, 1997.

\bibitem{Ro99}
V.~Robins.
\newblock Towards computing homology from finite approximations.
\newblock {\em Topology Proceedings}, 24(1):503--532, 1999.

\bibitem{UrVe97}
C.~Uras and A.~Verri.
\newblock Computing size functions from edge maps.
\newblock {\em Int. J. Comput. Vision}, 23(2):169--183, 1997.

\bibitem{VeUrFrFe93}
A.~Verri, C.~Uras, P.~Frosini, and M.~Ferri.
\newblock On the use of size functions for shape analysis.
\newblock {\em Biol. Cybern.}, 70:99--107, 1993.

\end{thebibliography}

\end{document}